\algrenewcommand\algorithmicrequire{\textbf{Input:}}
\algrenewcommand\algorithmicensure{\textbf{Output:}}
\newtheorem{rem}{Remark}[section]
\numberwithin{equation}{section}
\numberwithin{figure}{section}
\numberwithin{table}{section}
\newtheorem{theorem}{Theorem}[section]
\newtheorem{proposition}{Proposition}[section]
\newtheorem{definition}{Definition}[section]
\newenvironment{proof}{\begin{trivlist}
    \item[\hskip\labelsep{\bf Proof.}]}{$\hfill\Box$\end{trivlist}}
\theoremstyle{plain} \theorembodyfont{\rmfamily}
\numberwithin{equation}{section}
\numberwithin{figure}{section}
\numberwithin{table}{section}
\newcommand{\vertiii}[1]{{\left\vert\kern-0.25ex\left\vert\kern-0.25ex\left\vert #1 
\right\vert\kern-0.25ex\right\vert\kern-0.25ex\right\vert}}
\title{Generalizing the SVD of a matrix under non-standard inner product and its applications to linear ill-posed problems}
\author{Haibo Li
\thanks{School of Mathematics and Statistics, The University of Melbourne, Parkville, VIC 3010, Australia.
\href{mailto:haibo.li@unimelb.edu.au}{haibo.li@unimelb.edu.au}}
}
\date{}
\begin{document}
% \pagewiselinenumbers
\maketitle

\begin{abstract}
The singular value decomposition (SVD) of a matrix is a powerful tool for many matrix computation problems. In this paper, we consider generalizing the standard SVD to analyze and compute the regularized solution of linear ill-posed problems that arise from discretizing the first kind Fredholm integral equations. For the commonly used quadrature method for discretization, a regularizer of the form $\|x\|_{M}^2:=x^TMx$ should be exploited, where $M$ is symmetric positive definite. To handle this regularizer, we give the weighted SVD (WSVD) of a matrix under the $M$-inner product. Several important applications of WSVD, such as low-rank approximation and solving the least squares problems with minimum $\|\cdot\|_M$-norm, are studied. We propose the weighted Golub-Kahan bidiagonalization (WGKB) to compute several dominant WSVD components and a corresponding weighted LSQR algorithm to iteratively solve the least squares problem. All the above tools and methods are used to analyze and solve linear ill-posed problems with the regularizer $\|x\|_{M}^2$. A WGKB-based subspace projection regularization method is proposed to efficiently compute a good regularized solution, which can incorporate the prior information about $x$ encoded by the regularizer $\|x\|_{M}^2$. Several numerical experiments are performed to illustrate the fruitfulness of our methods.
\end{abstract}

\paragraph{Keywords} weighted singular value decomposition, ill-posed problems, subspace projection regularization, weighted Golub-Kahan bidiagonalization, weighted LSQR

\tableofcontents

%%-----------------------------------------------------------
\section{Introduction}\label{sec1}
% Introduction SVD, with formula
The singular value decomposition (SVD) is a well-known matrix factorization tool for diagonalizing a matrix with arbitrary shape \cite{doi:10.1137/1035134}. It generalizes the eigen-decomposition of a square normal matrix to any $m\times n$ matrix. Let $A\in\mathbb{R}^{m\times n}$, then there exist orthogonal matrices $\hat{U}=(\hat{u}_1,\dots,\hat{u}_m)\in\mathbb{R}^{m\times m}$ and $\hat{V}=(\hat{v}_1,\dots,\hat{v}_n)\in\mathbb{R}^{n\times n}$, such that
\begin{equation}\label{svd1}
  \hat{U}^{\top}A\hat{V} = \hat{\Sigma}
\end{equation} 
with
\begin{equation}
  \hat{\Sigma} = \begin{pmatrix}
    \hat{\Sigma}_{q} \\ \mathbf{0}
  \end{pmatrix},  \  m\geq n \ \ \ \
  \mathrm{or} \ \ \ \
  \hat{\Sigma} = \begin{pmatrix}
    \hat{\Sigma}_{q} & \mathbf{0}
  \end{pmatrix},  \  m<n,
\end{equation}
where $q=\min\{m,n\}$ and $\hat{\Sigma}_{q}=\mathrm{diag}(\hat{\sigma}_1,\dots,\hat{\sigma}_q)\in\mathbb{R}^{q\times q}$ is a diagonal matrix. The real values $\hat{\sigma}_1\geq\dots\geq\hat{\sigma}_q\geq 0$ are called singular values, and the corresponding vectors $u_i$ and $v_i$ are called right and left singular vectors, respectively. 

% Applications of SVD, Computations of SVD
The applications of SVD have many aspects. The mathematical applications include determining the rank, range and null spaces of a matrix, computing the pseudoinverse of a matrix, determining a low-rank approximation of a matrix, solving linear least squares problems, solving discrete linear ill-posed problems, and many others \cite{eckart1936approximation, Hansen1998, Bjorck1996, Golub2013}. We will review a part of them in Section \ref{sec2}. Besides, SVD is also extremely useful in all areas of science and engineering, such as signal processing, image processing, principal component analysis, control theory, recommender systems and many others \cite{smith1981multivariable,resnick1997recommender, Hansen2006, jolliffe2016principal}. For a large-scale matrix $A$, a partial SVD can be computed by using the Golub-Kahan bidiagonalization (GKB), which generates two Krylov subspaces and projects $A$ onto these two subspaces to get a small-scale bidiagonal matrix \cite{golub1965}. The bidiagonal structure of the projected matrix makes it convenient to develop efficient algorithms. For example, the dominant singular values and corresponding vectors of $A$ can be well approximated by the SVD of the projected bidiagonal matrix \cite{demmel1990accurate}. It is also shown in \cite{simon2000low} that a good low-rank approximation of $A$ can be directly obtained from the GKB of $A$ without directly computing any SVD. For large sparse least squares problem of the form $\min_{x\in\mathbb{R}^{n}}\|Ax-b\|_2$, the most commonly used LSQR solver is also based on GKB \cite{Paige1982}.

% raise research problems
In this paper, we focus on generalizing the SVD to analyze and compute the regularized solution of the first kind Fredholm equation \cite{kress2013linear}
\begin{equation}\label{fred0}
	g(s) = \int_{t_1}^{t_2}K(s,t)f(t)\mathrm{d}t + \sigma \dot W(s)
\end{equation}
with $t\in[t_1, t_2]$ and $s\in [s_1, s_2]$, where $K(s,t)\in L^{2}\left([t_1,t_2]\times[s_1,s_2]\right)$ is a square-integrable kernel function, $\dot W(s)$ is the Gaussian white noisy, more precisely, the generalized derivative of the standard Brown motion $W(s)$ with $s\in [s_1, s_2]$ \cite{karatzas2012brownian}, $\sigma$ is the scale of the noise, and $g(s)$ is the observation. The aim is to recover the unknown $f(t)$ from the noisy observation. To solve this problem, the first step is to discretize the above integral equation. Two commonly used discretization methods are the quadrature method and Galerkin method \cite{kirsch2011introduction}, and we focus on the first one in this paper. In the quadrature method, a quadrature rule with grid points $\{p_1,\dots,p_n\}$ and corresponding weights $\{w_1,\dots,w_n\}$ are chosen to approximate the integral as
\begin{equation}\label{Fred_disc}
  \int_{t_1}^{t_2}K(s,t)f(t)\mathrm{d}t \approx
  \sum_{i=1}^{n}w_iK(s,p_i)
\end{equation}
and the underlying unknown function we want to recover becomes the $n$-dimensional vector $x_{\text{true}}=(f(p_1),\dots,f(p_n))^{\top}$. The observations are chosen from $m$ grid points in $[s_1,s_2]$ to get a $m$-dimensional noisy vector $b$. After discretization, the above integral equation \eqref{fred0} becomes
\begin{equation}\label{ill-posed1}
  b = Ax + e ,
\end{equation}
where $e$ is a discrete Gaussian noise vector.

The above discrete linear system is usually ill-posed in the sense that $A$ is extremely ill-conditioned with singular values decreasing toward zero without a noticeable gap \cite{Hansen1998}. As a result, the naive solution to the least squares problem $\min_{x\in\mathbb{R}^{n}}\|Ax-b\|_2$ will deviate very far from the true solution. Tikhonov regularization is usually used to handle the ill-posedness property of this problem \cite{Tikhonov1977}. For a Gaussian white noise $e$, the standard-form Tikhonov regularization has the form 
\begin{equation}\label{tikh0}
  \min_{x\in\mathbb{R}^n}\|Ax-b\|_{2}^2+\lambda\|x\|_{2}^2 .
\end{equation}
However, we emphasis that the regularization term $\|x\|_{2}^2$ arises from the discretized version of $\|f\|_{L^{2}([t_1,t_2])}$; here $L^{2}([t_1,t_2])$ is the space of square-integrable functions define on $[t_1,t_2]$ with Lebesgue measure. If all the weights in the quadrature rule for approximating the integral in \eqref{Fred_disc} have the same value, then the vector norm $\|x\|_2$ is a good approximation to $\|f\|_{L^{2}([t_1,t_2])}$. If the weights have different values, a good approximation to $\|f\|_{L^{2}([t_1,t_2])}$ is $\|x\|_{M}$ instead of $\|x\|_2$, where $\|x\|_M=(x^{\top}Mx)^{1/2}$ with $M=\mathrm{diag}(w_1,\dots,w_n)$. In this case, the standard-form Tikhonov regularization should be replaced by
\begin{equation}\label{tikh1}
  \min_{x\in\mathbb{R}^n}\|Ax-b\|_{2}^2+\lambda\|x\|_{M}^2 .
\end{equation}

Using the standard SVD of $A$, we can analyze and compute the ill-posed problem \eqref{ill-posed1} with the regularizer $\|x\|_{2}^2$. For example, the naive solution to $\min_{x\in\mathbb{R}^{n}}\|Ax-b\|_2$ can be expressed as the SVD expansion form $x_{LS} = A^{\dag}b=\sum_{i=1}^{r}\frac{\hat{u}_{i}^{\top}b}{\hat{\sigma}_i}\hat{v}_i$, where $r$ is the rank of $A$, motivating the so-called ``truncated SVD'' (TSVD) regularization method, which truncates the first $k$ dominant SVD expansion from $x_{LS}$ to discard those highly amplified noisy components \cite{Hansen1987}. Moreover, the Tikhonov regularized solution to \eqref{tikh0} can be expressed as the filtered SVD expansion form $x_{\lambda} = \sum_{i=1}^{r}\frac{\hat{\sigma}_{i}^2}{\hat{\sigma}_{i}^2+\lambda}\frac{\hat{u}_{i}^{\top}b}{\hat{\sigma}_i}\hat{v}_i$, which tells us that the regularization parameter $\lambda$ should be chosen such that the filter factors $f_i:=\frac{\hat{\sigma}_{i}^2}{\hat{\sigma}_{i}^2+\lambda}\approx 1$ for small index $i$ and $f_i\approx 0$ for large $i$ to suppress noisy components. Correspondingly, the GKB process for computing SVD can also be exploited to design iterative regularization algorithms \cite{Oleary1981, Bjorck1988}. The LSQR solver with early stopping rules is a standard algorithm to handle the regularizer $\|x\|_{2}^2$. Specifically, the regularization property of LSQR with early stopping rules has been analyzed using the SVD of $A$, where the $k$-th LSQR solution also has a filtered SVD expansion form \cite{Hansen1998}.

% main contributions
To analyze the ill-posed problem \eqref{ill-posed1} with the regularizer $\|x\|_{M}^2$, in this paper, we generalize the standard SVD of $A$ to a new form under a non-standard inner product, which means that the $2$-inner product in $\mathbb{R}^n$ is replaced by the $M$-inner product $\langle z, z'\rangle:=z^{\top}Mz'$. The core idea is to treat the matrix $A\in\mathbb{R}^{m\times n}$ as a linear operator between the two finite-dimensional Hilbert spaces $\left(\mathbb{R}^n, \langle \cdot, \cdot \rangle_M \right)$ and $\left(\mathbb{R}^m, \langle \cdot, \cdot \rangle_2 \right)$. The attempt to generalize SVD is not new, which can go back to Van Loan in 1976 \cite{Van1976}, where he proposed the generalized SVD (GSVD) for a matrix pair. The GSVD is a powerful tool to analyze the general-form Tikhonov regularization term $\|Lx\|_{2}^2$, where $L\in\mathbb{R}^{p\times n}$ with $p\leq n$ is a regularization matrix. In recent years, there have been several other generalized forms of SVD, such as the weighted SVD of different forms proposed in \cite{sergienko2015weighted,kyrchei2017weighted,jozi2018weighted}. Specifically, in \cite{jozi2018weighted}, the authors used the weighted SVD to solve the discrete ill-posed problem arised from \eqref{fred0} when $f(t)$ and $g(s)$ are discretized on the same grid points with $[t_1,t_2]=[s_1,s_2]$ and $m=n$. In their paper, both the data fidelity term and regularization term use the weighted $M$-norm. However, we emphasize that for the discrete observation vector $b$ with Gaussian white noise, the most appropriate form for the data fidelity term should be $\|Ax-b\|_{2}^2$. In this paper, we generalize a new form of SVD, investigate its properties and propose numerical algorithms for its computation. Then we use this generalized SVD to instigate the solution of linear least squares problems with minimum $\|x\|_M$ norm and propose an iterative algorithm for this problem. This iterative algorithm with proper early stopping rules can be used to handle the discrete ill-posed problem with regularization term $\|x\|_{M}^2$.
 
The main contributions of this paper are listed as follows.
\begin{itemize}
  \item For any symmetric positive definite matrix $M\in\mathbb{R}^{n\times n}$, we generalize a new form SVD of $A\in\mathbb{R}^{m\times n}$ where the right singular vectors constitute an $M$-orthogonal basis of $\left(\mathbb{R}^n, \langle \cdot, \cdot \rangle_M \right)$. This new generalized SVD is also named the weighted SVD (WSVD), which shares many similar good properties as the standard SVD.
  \item We study some basic properties of WSVD and several applications, including its relations with the null space and range space of $A$, a new form of low-rank approximation of $A$ based on WSVD, and the WSVD form expression of the solution to $\min_{x\in\mathbb{R}^{n}}\|Ax-b\|_2$ with minimum $\|x\|_M$ norm.
  \item We propose a weighted Golub-Kahan bidiagonalization (WGKB) process, which can be used to compute several dominant WSVD components. A WGKB-based iterative algorithm for solving the least squares problems is also proposed. It is a weighted form corresponding to the standard LSQR algorithms, thereby we name it weighted LSQR (WLSQR).
  \item Using WSVD, we analyze the solution of the Tikhonov regularization problem \eqref{tikh1}. In order to utilize the information from the regularizer $\|x\|_{M}^2$ and avoid selecting $\lambda$ in advance, we propose the subspace projection regularization (SPR) method. We show that the WGKB-based SPR method is just the WLSQR with early stopping rules, which can efficiently compute a satisfied regularized solution.
\end{itemize}
In the numerical experiments part, we use four test examples of the first kind Fredholm equations to test our algorithm. With Simpson's rule for discretizing the integral, we show that the proposed algorithm has better performance than the standard LSQR solver for regularizing the original problem.

%---------------------------------------------
The paper is organized as follows. In Section \ref{sec2} we review some basic properties and applications of SVD. In Section \ref{sec3}, we propose the weighted SVD of $A$ with weight matrix $M$ and investigate its properties. In Section \ref{sec4}, we propose the WGKB process to compute several dominant WSVD components and the WLSQR algorithms for solving least squares problems. All the above methods are used in Section \ref{sec5} to analyze and develop iterative regularization methods for linear ill-posed problems with regularizer $\|x\|_{M}^2$. In Section \ref{sec6}, we use several numerical examples to illustrate the effectiveness of the new methods. Finally, we conclude the paper in Section \ref{sec7}.

Throughout the paper, denote by $\mathcal{R}(C)$ and $\mathcal{N}(C)$ the range space and null space of a matrix $C$, by $I_{k}$ the $k$-by-$k$ identity matrix. We denote by $\mathbf{0}$ a zero vector or matrix with its order clear from the context.

%%-------------------------------------------------------------------
\section{Properties of SVD and its applications}\label{sec2}
In this section, we review several important properties of the SVD and its applications to linear ill-posed problems. They motivate us to generalize the SVD to the non-standard inner product case, which can be applied to handle the regularizer of the form \eqref{tikh1}.

Let us start from the connections between the SVD, Schatten $p$-norm \cite[\S 7.4.7]{horn2012matrix}, and low-rank approximation of a matrix \cite[\S 2.4]{Golub2013}.

\begin{theorem}[Schatten $p$-norm] \label{pnorm}
  Suppose the SVD of a matrix $A$ is as in \eqref{svd1}. For any integer $1\leq p\leq \infty$, define
  \begin{equation*}
    \|A\|_p = \left(\sum_{i=1}^{\min\{m,n\}}\hat{\sigma}_{i}^p\right)^{1/p}.
  \end{equation*}
  Then $\|\cdot\|_p$ is a matrix norm on $\mathbb{R}^{m\times n}$, called the Schatten $p$-norm.
\end{theorem}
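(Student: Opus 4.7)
The plan is to verify the three defining axioms of a matrix norm: (i) positive definiteness $\|A\|_p\geq 0$ with equality iff $A=\mathbf{0}$; (ii) absolute homogeneity $\|\alpha A\|_p=|\alpha|\,\|A\|_p$; and (iii) the triangle inequality $\|A+B\|_p\leq\|A\|_p+\|B\|_p$. The first two are straightforward consequences of the SVD: all $\hat{\sigma}_i(A)\geq 0$, and $A=\mathbf{0}\iff\hat{\sigma}_i(A)=0$ for all $i$ (since $A=\hat{U}\hat{\Sigma}\hat{V}^\top$), while $\hat{\sigma}_i(\alpha A)=|\alpha|\hat{\sigma}_i(A)$ follows by factoring $|\alpha|$ out of $\hat{\Sigma}$ and absorbing the sign into one of the orthogonal factors. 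So the real content is step (iii).

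For the triangle inequality my plan is to reduce the statement about the matrix-valued map $A\mapsto(\hat{\sigma}_1(A),\dots,\hat{\sigma}_q(A))$ to the classical Minkowski inequality on $\mathbb{R}^q$, via a weak majorization step. The key ingredient is Ky Fan's variational principle
\[
\sum_{i=1}^{k}\hat{\sigma}_i(A)=\max_{U\in\mathbb{R}^{m\times k},\,V\in\mathbb{R}^{n\times k},\,U^\top U=V^\top V=I_k}\mathrm{tr}(U^\top AV),
\]
which I would first recall (or quickly derive from \eqref{svd1} by choosing $U,V$ to be the first $k$ left and right singular vectors). Applied to $A+B$ and bounding $\mathrm{tr}(U^\top(A+B)V)=\mathrm{tr}(U^\top AV)+\mathrm{tr}(U^\top BV)$ by the two individual maxima, this yields Ky Fan's inequality
\[
\sum_{i=1}^{k}\hat{\sigma}_i(A+B)\leq\sum_{i=1}^{k}\hat{\sigma}_i(A)+\sum_{i=1}^{k}\hat{\sigma}_i(B),\qquad k=1,\dots,q,
\]
i.e.\ the weak majorization $\hat{\sigma}(A+B)\prec_w\hat{\sigma}(A)+\hat{\sigma}(B)$.

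From this weak majorization I would invoke the standard fact (Hardy--Littlewood--P\'olya / Ky Fan dominance) that every symmetric gauge function $\Phi$ is monotone under weak majorization, so
\[
\|A+B\|_p=\Phi_p\bigl(\hat{\sigma}(A+B)\bigr)\leq\Phi_p\bigl(\hat{\sigma}(A)+\hat{\sigma}(B)\bigr),
\]
where $\Phi_p(\cdot)=\|\cdot\|_{\ell^p}$ is the symmetric gauge function on $\mathbb{R}^q$ associated with $p$. Finally, the usual Minkowski inequality on $\mathbb{R}^q$ gives $\Phi_p(\hat{\sigma}(A)+\hat{\sigma}(B))\leq\Phi_p(\hat{\sigma}(A))+\Phi_p(\hat{\sigma}(B))=\|A\|_p+\|B\|_p$. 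Chaining the two bounds completes the triangle inequality.

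The main obstacle is (iii): singular values of $A+B$ cannot simply be compared entry-wise with those of $A$ and $B$ (the map $A\mapsto\hat{\sigma}_i(A)$ is not additive), so one genuinely needs the variational characterization to convert the sum $A+B$ into a sum of traces before any inequality becomes tractable. If one prefers to avoid the general majorization machinery, the edge cases are easy: $p=\infty$ reduces to the spectral-norm triangle inequality $\|A+B\|_2\leq\|A\|_2+\|B\|_2$ from the operator norm definition, and $p=2$ is the Frobenius norm where $\|A\|_2^2=\mathrm{tr}(A^\top A)$ gives the triangle inequality via Cauchy--Schwarz. These cases could be mentioned as sanity checks, but the uniform argument via Ky Fan plus Minkowski handles all $1\leq p\leq\infty$ at once.
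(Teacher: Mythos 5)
The paper does not actually prove this statement: Theorem \ref{pnorm} is quoted as background with a citation to \cite[\S 7.4.7]{horn2012matrix}, so there is no internal proof to compare against. Judged on its own, your plan is the standard and correct route: axioms (i)--(ii) from the SVD, and the triangle inequality via Ky Fan's maximum principle, the resulting weak majorization $\hat{\sigma}(A+B)\prec_w\hat{\sigma}(A)+\hat{\sigma}(B)$, monotonicity of the symmetric gauge function $\Phi_p$ under weak majorization, and Minkowski on $\mathbb{R}^q$. This is essentially the textbook argument the paper's reference uses, so your proposal is aligned with the intended justification rather than a genuinely different one.

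One caveat worth fixing in the write-up: your parenthetical claim that Ky Fan's variational principle can be ``quickly derived from \eqref{svd1} by choosing $U,V$ to be the first $k$ left and right singular vectors'' only establishes the easy inequality $\max \mathrm{tr}(U^{\top}AV)\geq\sum_{i=1}^{k}\hat{\sigma}_i(A)$. The direction you actually need for the triangle inequality is the reverse bound $\mathrm{tr}(U^{\top}AV)\leq\sum_{i=1}^{k}\hat{\sigma}_i(A)$ for \emph{all} admissible $U,V$, which requires a separate argument (e.g.\ von Neumann's trace inequality, or a CS-decomposition/expansion argument bounding $|\mathrm{tr}(U^{\top}\hat{u}_i\hat{v}_i^{\top}V)|$). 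Either cite that inequality explicitly or supply its short proof; with that in place, the chain Ky Fan $\Rightarrow$ weak majorization $\Rightarrow$ gauge-function monotonicity $\Rightarrow$ Minkowski is complete, and your remarks on the easy special cases $p=2,\infty$ are fine as sanity checks (with $p=\infty$ read as $\|A\|_\infty=\hat{\sigma}_1$ in the limiting sense).
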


The specific choice of $p$ yields several commonly used matrix norms:
\begin{enumerate}
  \item $p=1$: gives the nuclear norm. It is commonly used in low-rank matrix completion algorithms.
  \item $p=2$: gives the Frobenius norm (often denoted by $\|\cdot\|_F$).
  \item $p=\infty$: gives the spectral norm (often denoted by $\|\cdot\|_2$).
\end{enumerate}

All the Schatten norms are unitarily invariant, which means that $\|A\|_p=\|\bar{U}^{\top}A\bar{V}\|_p=\|A\|_{p}$ for any matrix $A$ and all unitary matrices $\bar{U}$ and $\bar{V}$. In this paper, we focus on the spectral norm and use the popular notation $\|\cdot\|_2$ to denote it. From another definition of the spectral norm, we also have $\|A\|_2=\max_{x\neq\mathbf{0}}\frac{\|Ax\|_2}{\|x\|_2}=\hat{\sigma}_1$.

One of the reasons that SVD is so widely used is that it can be used to find the best low-rank approximation to a matrix. The following low-rank approximation property is often used such as in data compression, image compression and recommender systems.

\begin{theorem}[Eckhart-Young-Mirsky]
  Suppose $k<\mathrm{rank}(A)=r$ and let 
  \begin{equation}
    \hat{A}_k = \hat{U}_{k}\hat{\Sigma}_k\hat{V}_{k}^{\top}=\sum_{i=1}^{k}\hat{\sigma}_i\hat{u}_i\hat{v}_{i}^{\top} ,
  \end{equation}
  where $\hat{U}_k$ and $\hat{V}_k$ contain the first $k$ columns of $\hat{U}$ and $\hat{V}$, and $\hat{\Sigma}_k$ is the first $k$-by-$k$ part of $\hat{\Sigma}$.
  Then 
  \begin{equation}
    \min_{\mathrm{rank}(X)\leq k}\|A-X\|_2=\|A-\hat{A}_k\|_2 = \hat{\sigma}_{k+1}.
  \end{equation}
\end{theorem}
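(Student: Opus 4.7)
The plan is to split the statement into two inequalities. First I would verify that $\hat{A}_k$ achieves the value $\hat{\sigma}_{k+1}$, i.e. $\|A-\hat{A}_k\|_2 = \hat{\sigma}_{k+1}$. This is the easy direction: using the unitary invariance of the spectral norm noted just above the theorem, together with the SVD $\hat{U}^\top A \hat{V} = \hat{\Sigma}$, the difference $A - \hat{A}_k = \sum_{i=k+1}^{r}\hat{\sigma}_i \hat{u}_i \hat{v}_i^\top$ transforms under $\hat{U}^\top(\cdot)\hat{V}$ into a diagonal matrix whose nonzero entries are $\hat{\sigma}_{k+1},\dots,\hat{\sigma}_r$. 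Since $\|\cdot\|_2$ of a diagonal matrix equals its largest singular value, the result is $\hat{\sigma}_{k+1}$.

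The main work is the lower bound: showing $\|A-X\|_2 \geq \hat{\sigma}_{k+1}$ for every $X\in\mathbb{R}^{m\times n}$ with $\mathrm{rank}(X)\leq k$. The standard approach here is a dimension-counting argument. Since $\mathrm{rank}(X)\le k$, the null space $\mathcal{N}(X)\subseteq\mathbb{R}^n$ has dimension at least $n-k$. The subspace $\mathrm{span}\{\hat{v}_1,\dots,\hat{v}_{k+1}\}$ has dimension $k+1$. Because $(n-k)+(k+1)=n+1>n$, the two subspaces intersect in at least one nonzero vector, so one can pick a unit vector $z\in \mathcal{N}(X)\cap\mathrm{span}\{\hat{v}_1,\dots,\hat{v}_{k+1}\}$.

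Writing $z=\sum_{i=1}^{k+1}\alpha_i \hat{v}_i$ with $\sum_{i=1}^{k+1}\alpha_i^2=1$, I would then use $Xz=\mathbf{0}$ together with orthonormality of $\{\hat{u}_i\}$ to compute
\begin{equation*}
\|A-X\|_2 \;\geq\; \|(A-X)z\|_2 \;=\; \|Az\|_2 \;=\; \Bigl\|\sum_{i=1}^{k+1}\alpha_i\hat{\sigma}_i \hat{u}_i\Bigr\|_2 \;=\; \Bigl(\sum_{i=1}^{k+1}\alpha_i^2\hat{\sigma}_i^2\Bigr)^{1/2} \;\geq\; \hat{\sigma}_{k+1},
\end{equation*}
where the first inequality is the variational characterization $\|A-X\|_2 = \max_{y\neq\mathbf{0}}\|(A-X)y\|_2/\|y\|_2$ recalled just above the theorem, and the last inequality uses $\hat{\sigma}_1\geq\cdots\geq\hat{\sigma}_{k+1}$ together with $\sum\alpha_i^2=1$.

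The only delicate point is making sure the subspace intersection argument is stated cleanly when $X$ may have rank strictly less than $k$ (in which case $\dim\mathcal{N}(X)$ is even larger and the argument only gets easier) and handling boundary cases where some $\hat{\sigma}_i$ for $i\leq k+1$ may already be zero. Both are cosmetic, so I expect no real obstacle; the key conceptual step is the pigeonhole-style dimension count that forces a vector $z$ simultaneously annihilated by $X$ and supported on the top $k+1$ right singular vectors of $A$.
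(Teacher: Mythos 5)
Your proof is correct: the paper states this classical result without proof (citing the literature), but its own proof of the weighted analogue in Section 3 uses exactly the argument you propose --- achievability of $\hat{\sigma}_{k+1}$ by the truncated expansion via unitary invariance, and the lower bound via the dimension count forcing a nonzero $z\in\mathcal{N}(X)\cap\mathrm{span}\{\hat{v}_1,\dots,\hat{v}_{k+1}\}$ with $\|Az\|_2\geq\hat{\sigma}_{k+1}$. Note also that since $k<r=\mathrm{rank}(A)$, all $\hat{\sigma}_i$ with $i\leq k+1$ are strictly positive, so the boundary case you flag does not actually arise.
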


The Moore-Penrose pseudoinverse of a matrix is the most widely known generalization of the inverse matrix \cite[5.5.2]{Golub2013}. Using the SVD of $A$, we can give its explicit form: 
\begin{equation}
  A^{\dag}= \hat{V}\hat{\Sigma}^{\dag}\hat{U}^{\top}, \ \ \  
\hat{\Sigma}^{\dag} = \begin{pmatrix}
  \hat{\Sigma}_{r}^{-1} & \mathbf{0} \\
  \mathbf{0} & \mathbf{0}
\end{pmatrix} .
\end{equation}
One application of the Moore-Penrose pseudoinverse is to solve the rank-deficient least squares problems \cite[\S 5.5.1]{Golub2013}.

\begin{theorem}
  Let $\mathrm{rank}(A)=r$ (can be smaller than $\min\{m,n\}$). Then the rank-deficient least squares problem with a minimum 2-norm 
  \begin{equation}
    \min_{x\in \mathcal{S}}\|x\|_2, \ \ \ \mathcal{S}=\{x: \|Ax-b\|_2=\mathrm{min}\}
  \end{equation}
  has a unique solution
  \begin{equation}\label{LS}
    x_{LS} = A^{\dag}b
    =\sum_{i=1}^{r}\frac{\hat{u}_{i}^{\top}b}{\hat{\sigma}_i}\hat{v}_i .
  \end{equation}
\end{theorem}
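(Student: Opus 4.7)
The plan is to reduce the rank-deficient least squares problem to a simpler diagonal form by rotating with the SVD factors, and then to read off the unique minimum-norm minimizer from that diagonal form. Let $A=\hat{U}\hat{\Sigma}\hat{V}^{\top}$ be the SVD from \eqref{svd1}, and assume $\mathrm{rank}(A)=r$, so only the first $r$ diagonal entries of $\hat{\Sigma}$ are nonzero. The key starting observation is that $\hat{U}$ and $\hat{V}$ are orthogonal, hence $\|Ax-b\|_2=\|\hat{\Sigma}(\hat{V}^{\top}x)-\hat{U}^{\top}b\|_2$ and $\|x\|_2=\|\hat{V}^{\top}x\|_2$. Introducing the change of variables $y=\hat{V}^{\top}x$ and $c=\hat{U}^{\top}b$, the problem becomes: minimize $\|y\|_2$ over all $y$ that minimize $\|\hat{\Sigma}y-c\|_2$.

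Next, I would partition $y$ and $c$ conformally with the block structure of $\hat{\Sigma}$. Writing $y=(y_1^{\top},y_2^{\top})^{\top}$ with $y_1\in\mathbb{R}^{r}$ and $c=(c_1^{\top},c_2^{\top})^{\top}$ with $c_1\in\mathbb{R}^{r}$, the residual splits as
\begin{equation*}
\|\hat{\Sigma}y-c\|_2^{2}=\|\hat{\Sigma}_{r}y_1-c_1\|_2^{2}+\|c_2\|_2^{2},
\end{equation*}
since the remaining rows or columns of $\hat{\Sigma}$ are zero. Because $\hat{\Sigma}_r$ is invertible, the minimum residual is attained exactly when $y_1=\hat{\Sigma}_r^{-1}c_1$, while $y_2$ remains free. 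Hence the solution set $\mathcal{S}$ in the transformed coordinates is the affine subspace $\{(\hat{\Sigma}_r^{-1}c_1,y_2):y_2\text{ arbitrary}\}$, and $\|y\|_2^{2}=\|\hat{\Sigma}_r^{-1}c_1\|_2^{2}+\|y_2\|_2^{2}$ is uniquely minimized by $y_2=\mathbf{0}$.

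Finally, I would translate back: the unique minimizer is $x_{LS}=\hat{V}y=\hat{V}_{r}\hat{\Sigma}_{r}^{-1}\hat{U}_{r}^{\top}b$, where $\hat{U}_r$, $\hat{V}_r$ are the first $r$ columns of $\hat{U}$, $\hat{V}$. Recognizing $\hat{V}\hat{\Sigma}^{\dag}\hat{U}^{\top}=A^{\dag}$ yields $x_{LS}=A^{\dag}b$, and expanding the product column-wise gives the desired series $\sum_{i=1}^{r}\frac{\hat{u}_i^{\top}b}{\hat{\sigma}_i}\hat{v}_i$.

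The only delicate point is bookkeeping with the two possible shapes of $\hat{\Sigma}$ (for $m\geq n$ and $m<n$) and with the possibility that $r<\min\{m,n\}$; the block decomposition above handles both cases uniformly because all that really matters is the $r\times r$ nonsingular leading block $\hat{\Sigma}_r$. This is the only mild obstacle; the rest is a direct consequence of orthogonal invariance of the $2$-norm and the separability that $\hat{\Sigma}$ enforces between the $r$ constrained components of $y$ and the free ones.
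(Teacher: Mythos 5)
Your proof is correct and is the standard SVD-based argument: rotate by $\hat{U},\hat{V}$, split the residual into the invertible $\hat{\Sigma}_r$-block plus a constant term, and kill the free components to get the unique minimum-norm solution $\hat{V}_r\hat{\Sigma}_r^{-1}\hat{U}_r^{\top}b=A^{\dag}b$. The paper states this classical result without proof (citing Golub and Van Loan), but your argument is essentially the same as the proof the paper gives for its weighted analogue in Theorem \ref{thm6}, where the residual is split via $U=(U_r,U_{r,\perp})$, $V=(V_r,V_{r,\perp})$ and the $M$-norm is minimized over the affine solution set.
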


For linear ill-posed problem with Tikhonov regularization \eqref{tikh0}, the regularized solution has a similar expression to \eqref{LS} but with additional fiter factors \cite[\S 4.2]{Hansen1998}:
\begin{equation}\label{fil_sol}
  x_{\lambda} = \sum_{i=1}^{r}\frac{\hat{\sigma}_{i}^2}{\hat{\sigma}_{i}^2+\lambda}\frac{\hat{u}_{i}^{\top}b}{\hat{\sigma}_i}\hat{v}_i,
\end{equation}
where the regularzation parameter $\lambda$ should be chosen such that the filter factors $f_{i}:=\frac{\hat{\sigma}_{i}^2}{\hat{\sigma}_{i}^2+\lambda}\approx 1$ for those small index $i$ corresponding to dominant information about $x_{\text{true}}$ and $f_i\approx 0$ for those large index $i$ to filter out noisy components.

%------ Computing SVD -----------------
For small-scale matrices, the SVD can be efficiently computed by a variant of QR algorithm \cite{golub1965} or Jocobi rotation procedure \cite{drmavc2008new1,drmavc2008new2}. For large-scale matrices, one commonly used SVD algorithm is based on the Golub-Kahan bidiagonalization (GKB), which applies a Lanczos-type iterative procedure to $A$ to generate two Krylov subspaces and project $A$ to be a small-scale bidiagonal matrix. Then the SVD of the reduced bidiagonal matrix is computed to approximate some dominant SVD components of $A$ \cite{fernando1994accurate}. The GKB process is also a standard procedure used in LSQR for iteratively solving large-scale least square problems. At the $k$-th step, it solves the following problem:
\begin{equation*}
  x_k = \min_{x\in\mathcal{S}_k}\|Ax-b\|_2, \ \ \ \mathcal{S}_k=\mathcal{K}_k(A^{\top}A,A^{\top}b):=\mathrm{span}\{(A^{\top}A)^i,A^{\top}b\}_{i=0}^{k-1} .
\end{equation*}
For large-scale linear ill-posed problems, the above approach is very efficient and fruitful for handling the $\|x\|_{2}^{2}$ regularization term, where an early stopping rule should be used to avoid containing too much noise in the iterative solution.

%% LS or ill-posed problems, consider ||x||_L2 but not ||x||_l2
If the first kind Fredholm equation is discretized using weights $\{w_{1},\dots,w_n\}$ with different values in \eqref{Fred_disc}, then the corresponding approximation to $\|f\|_{L^2}$ is $\|x\|_M$ instead of $\|x\|_2$, where $M=\mathrm{diag}(w_{1},\dots,w_n)$. In this case, we need to consider the Tikhonov regularization problem \eqref{tikh0} and the corresponding iterative regularization methods. To analyze and solve this problem, in the following part, we generalize the SVD to the $M$-inner product case and propose corresponding iterative algorithms.

%%-------------------------------------------------------------
\section{Weighted SVD with non-standard inner-product}\label{sec3}
In this section, let $M\in\mathbb{R}^{n\times n}$ be a symmetric positive definite matrix. It can be either diagonal or non-diagonal. This matrix can introduce a new inner product in $\mathbb{R}^{n}$.

\begin{definition}
  For any $A\in\mathbb{R}^{m\times n}$, define the linear operator $\mathcal{A}: (\mathbb{R}^{n}, \langle\cdot,\cdot\rangle_{M}) \to (\mathbb{R}^{m}, \langle\cdot,\cdot\rangle_{2})$ as $\mathcal{A}: x\mapsto Ax$ for $x\in\mathbb{R}^{n}$ under the canonical bases of $\mathbb{R}^{n}$ and $\mathbb{R}^{m}$, where $\langle\cdot,\cdot\rangle_{2}$ is the standard 2-inner product and $\langle x,x'\rangle_{M}:=x^{\top}Mx'$ is called the $M$-inner product.
\end{definition}

The operator $\mathcal{A}$ is bounded since $(\mathbb{R}^{n}, \langle\cdot,\cdot\rangle_{M})$ and $(\mathbb{R}^{m}, \langle\cdot,\cdot\rangle_{2})$ are both finite dimensional Hilbert spaces. Thus, we can definite the operator norm of $\mathcal{A}$.

\begin{definition}
  Define the $M$-weighted norm of $A$ as
  \begin{equation}
    \|A\|_{M,2} := \|\mathcal{A}\|:= \max_{x\neq\mathbf{0}}\frac{\|Ax\|_2}{\|x\|_{M}}.
  \end{equation}
\end{definition}

Similar to the unitarily invariant property of $\|A\|_2$, we have the following property for $\|A\|_{M,2}$.

\begin{proposition}\label{lem1}
  Let $\tilde{U}\in\mathbb{R}^{m\times m}$ and $\tilde{V}\in\mathbb{R}^{n\times n}$ are 2- and $M$- orthogonal matrices, respectively, i.e. $\tilde{U}^{\top}\tilde{U}=I_{m}$ and $\tilde{V}^{\top}M\tilde{V}=I_{n}$. Then we have
  \begin{equation}
    \|\tilde{U}^{\top}A\tilde{V}\|_{2} = \|A\|_{M,2}
  \end{equation}
\end{proposition}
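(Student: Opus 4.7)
The plan is to unfold both sides using the definitions of the norms and then bridge them via a change of variables, using the two orthogonality properties in their natural roles: $\tilde{U}$ preserves 2-norms of vectors in $\mathbb{R}^m$, while $\tilde{V}$ turns the 2-norm on $\mathbb{R}^n$ into the $M$-norm.

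First I would write
\[
\|\tilde{U}^{\top}A\tilde{V}\|_{2} = \max_{y\neq\mathbf{0}}\frac{\|\tilde{U}^{\top}A\tilde{V}y\|_{2}}{\|y\|_{2}}.
\]
The numerator simplifies immediately because $\tilde{U}^{\top}\tilde{U}=I_{m}$ forces $\tilde{U}^{\top}$ to act as an isometry in the standard 2-norm, i.e.\ $\|\tilde{U}^{\top}z\|_{2}=\|z\|_{2}$ for every $z\in\mathbb{R}^{m}$ (this needs a one-line check: $\|\tilde{U}^{\top}z\|_{2}^{2}=z^{\top}\tilde{U}\tilde{U}^{\top}z$, and $\tilde{U}\tilde{U}^{\top}=I_{m}$ follows from $\tilde{U}$ being square and $\tilde{U}^{\top}\tilde{U}=I_{m}$). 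So the numerator becomes $\|A\tilde{V}y\|_{2}$.

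Next I would make the substitution $x=\tilde{V}y$. Since $\tilde{V}^{\top}M\tilde{V}=I_{n}$ and $M$ is positive definite, $\tilde{V}$ is invertible, so $x$ ranges over all of $\mathbb{R}^{n}\setminus\{\mathbf{0}\}$ as $y$ does. The denominator transforms via
\[
\|y\|_{2}^{2} \;=\; y^{\top}y \;=\; y^{\top}\tilde{V}^{\top}M\tilde{V}y \;=\; x^{\top}Mx \;=\; \|x\|_{M}^{2},
\]
so $\|y\|_{2}=\|x\|_{M}$. Substituting gives
\[
\|\tilde{U}^{\top}A\tilde{V}\|_{2} \;=\; \max_{x\neq\mathbf{0}}\frac{\|Ax\|_{2}}{\|x\|_{M}} \;=\; \|A\|_{M,2},
\]
which is the desired identity.

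There is no real obstacle here; the proof is essentially a change of variables. The only point to be careful about is not confusing the two different orthogonality notions: $\tilde{U}$ is classically orthogonal and so acts isometrically on the output space $(\mathbb{R}^{m},\langle\cdot,\cdot\rangle_{2})$, whereas $\tilde{V}$ is $M$-orthogonal and so intertwines the 2-inner product on the parameter side $y$ with the $M$-inner product on the domain side $x$. Matching each matrix with the correct inner product in the Rayleigh-type quotient is the whole content of the argument.
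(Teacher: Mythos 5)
Your proof is correct and follows essentially the same route as the paper: rewrite the spectral norm as a Rayleigh-type quotient, use $\tilde{V}^{\top}M\tilde{V}=I_n$ to trade $\|y\|_2$ for $\|\tilde{V}y\|_M$ via the substitution $x=\tilde{V}y$ (valid since $\tilde{V}$ is invertible), and use the orthogonality of $\tilde{U}$ to drop $\tilde{U}^{\top}$ from the numerator. The only difference is cosmetic: you justify the isometry $\|\tilde{U}^{\top}z\|_2=\|z\|_2$ explicitly via $\tilde{U}\tilde{U}^{\top}=I_m$, a step the paper leaves implicit.
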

\begin{proof}
  Since $\tilde{V}^{\top}M\tilde{V}=I_{n}$ and $M$ is positive definite, it follows that $\tilde{V}$ is invertible. Thus, we have
  \begin{align*}
    \|\tilde{U}^{\top}A\tilde{V}\|_{2}
    &= \max_{x\neq\mathbf{0}}\frac{\|\tilde{U}^{\top}A\tilde{V}x\|_2}{\|x\|_2}
    = \max_{x\neq\mathbf{0}}\frac{\|\tilde{U}^{\top}A\tilde{V}x\|_2}{\|\tilde{V}x\|_M} \\
    &= \max_{y\neq\mathbf{0}}\frac{\|\tilde{U}^{\top}Ay\|_2}{\|y\|_M}
    = \max_{y\neq\mathbf{0}}\frac{\|Ay\|_2}{\|y\|_M}  \ \ \ \ \ (\mathrm{let} \ y=\tilde{V}x) \\
    &= \|A\|_{M,2},
  \end{align*} 
  where we have used $\|\tilde{V}x\|_{M}^2=x^{\top}\tilde{V}^{\top}M\tilde{V}x=x^{\top}x=\|x\|_{2}^2$. The proof is completed.
\end{proof}

The following result generates the SVD of a matrix $A$, which has a very similar form to \eqref{svd1}.

\begin{theorem}\label{thm:wsvd}
  Let $A\in\mathbb{R}^{m\times n}$, and $M\in\mathbb{R}^{n\times n}$ is symmetric positive definite. There exist 2-orthogonal matrix $U\in\mathbb{R}^{m\times m}$ and $M$-orthogonal matrix $V\in\mathbb{R}^{n\times n}$, and diagonal matrix $\Sigma_{r}=\mathrm{diag}(\sigma_1,\dots,\sigma_r)$ with $\sigma_1\geq \dots \geq \sigma_r>0$, such that 
  \begin{equation}\label{wsvd}
    U^{\top}AV = \Sigma := \begin{pmatrix}
      \Sigma_r & \mathbf{0} \\
      \mathbf{0} & \mathbf{0}
    \end{pmatrix}, \ \ \ 
    \Sigma \in \mathbb{R}^{m\times n} .
  \end{equation}
\end{theorem}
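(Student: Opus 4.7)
The plan is to reduce Theorem \ref{thm:wsvd} to the standard SVD via a change of variables that converts the $M$-inner product on $\mathbb{R}^n$ into the standard one. Since $M$ is symmetric positive definite, it admits a unique symmetric positive definite square root $M^{1/2}$ (by the spectral theorem), and $M^{-1/2}$ is well defined. The map $x \mapsto M^{1/2}x$ is then an isometry from $(\mathbb{R}^n, \langle\cdot,\cdot\rangle_M)$ to $(\mathbb{R}^n, \langle\cdot,\cdot\rangle_2)$, since $\langle M^{1/2}x, M^{1/2}y\rangle_2 = x^\top M y = \langle x,y\rangle_M$.

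Concretely, I would define $B := A M^{-1/2} \in \mathbb{R}^{m\times n}$ and apply the standard SVD from \eqref{svd1} to $B$, obtaining orthogonal matrices $\hat U\in\mathbb{R}^{m\times m}$, $\hat W\in\mathbb{R}^{n\times n}$ and a matrix $\hat\Sigma$ of the displayed block-diagonal shape, whose positive diagonal entries $\sigma_1\geq\cdots\geq\sigma_r>0$ (with $r=\mathrm{rank}(B)$) will serve as the claimed weighted singular values. Then I would set
\begin{equation*}
U := \hat U, \qquad V := M^{-1/2}\hat W.
\end{equation*}
The verification is a two-line check: for $M$-orthogonality,
\begin{equation*}
V^\top M V \;=\; \hat W^\top M^{-1/2} M M^{-1/2} \hat W \;=\; \hat W^\top \hat W \;=\; I_n,
\end{equation*}
and for the diagonalization identity,
\begin{equation*}
U^\top A V \;=\; \hat U^\top A M^{-1/2}\hat W \;=\; \hat U^\top B \hat W \;=\; \hat\Sigma,
\end{equation*}
which is exactly the shape asserted in \eqref{wsvd}. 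Since $M^{-1/2}$ is invertible we have $r = \mathrm{rank}(B) = \mathrm{rank}(A)$, so the block sizes line up.

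There is no serious obstacle here once one commits to the square-root substitution; the only point requiring care is the existence and invertibility of $M^{1/2}$, which is immediate from $M\succ 0$. An alternative route would be a direct variational construction, choosing $v_1$ to maximize $\|Av\|_2$ subject to $\|v\|_M = 1$ and then iterating on $M$-orthogonal complements to build $V$ column by column; this essentially reproves the standard SVD in the weighted inner product and is more tedious than the reduction above, so I would present the change-of-variables proof and perhaps remark on the variational interpretation afterwards.
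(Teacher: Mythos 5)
Your proof is correct, but it takes a genuinely different route from the paper. You reduce the statement to the standard SVD through the change of variables $B := AM^{-1/2}$, setting $U=\hat U$ and $V=M^{-1/2}\hat W$, and the two verification identities $V^{\top}MV=I_n$ and $U^{\top}AV=\hat\Sigma$ are exactly right; since the standard SVD orders the singular values decreasingly, the trailing zeros give precisely the block shape in \eqref{wsvd}, and the rank identity $r=\mathrm{rank}(B)=\mathrm{rank}(A)$ follows from invertibility of $M^{-1/2}$. The paper instead gives a direct variational--inductive construction in the weighted geometry: it picks $v_1$ with $\|v_1\|_M=1$ attaining $\sigma_1=\|A\|_{M,2}$, completes $v_1$ and $u_1$ to $M$- and $2$-orthogonal matrices, uses the invariance $\|\tilde U^{\top}A\tilde V\|_2=\|A\|_{M,2}$ (Proposition \ref{lem1}) to force the off-diagonal row to vanish, and then inducts on the trailing block --- essentially the ``more tedious'' alternative you sketch in your closing remark. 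What your reduction buys is brevity and an explicit identification of the weighted singular triplets with the ordinary SVD of $AM^{-1/2}$ (equivalently $AL_M^{-1}$), which makes the link to standard SVD theory, and later to Theorem \ref{thm5.1}, transparent; note that using the square root only as a proof device does not conflict with the paper's computational goal of avoiding factorizations of $M$. What the paper's construction buys is a self-contained argument inside the $M$-inner-product framework that directly exhibits $\sigma_1$ as the operator norm $\|A\|_{M,2}$ and exercises the invariance lemma that is reused in the weighted Eckart--Young theorem. Either proof is acceptable.
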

\begin{proof}
  By the definition of $\|A\|_{M,2}$, there exist vectors $v_1\in\mathbb{R}^{n}$ and $u_1\in\mathbb{R}^{m}$ such that $\|v_{1}\|_{M}=\|u_1\|_2=1$ and $Av_1=\sigma_1u_{1}$ with $\sigma_1=\|A\|_{M,2}$. Let $V_{2}\in\mathbb{R}^{n\times(n-1)}$ and $U_{2}\in\mathbb{R}^{m\times(m-1)}$ such that $V=(v_1, V_2)$ and $U=(u_1, U_2)$ are $M$- and 2-orthogonal, respectively. Then we get
  \begin{equation*}
    U^{\top}AV = 
    \begin{pmatrix}
      u_{1}^{\top}Av_1 & u_{1}^{\top}AV_{2} \\
      U_{2}^{\top}Av_{1} & U_{2}^{\top}AV_{2}
    \end{pmatrix} =:
    \begin{pmatrix}
      \sigma_{1} & x^{\top} \\
      \mathbf{0} & B
    \end{pmatrix} =: A_{1} ,
  \end{equation*}
  where $x\in\mathbb{R}^{n-1}$ and $B\in\mathbb{R}^{(m-1)\times(n-1)}$. By Lemma \ref{lem1} we have $\|A_{1}\|_{2}=\|A\|_{M,2}=\sigma_1$. Let $\tilde{x}=(\sigma_{1}, x^{\top})^{\top}$. It follows that
  \begin{align*}
    \sigma_{1}^2\geq \frac{\|A_{1}\tilde{x}\|_{2}^2}{\|\tilde{x}\|_{2}^2}
    = \left\|\begin{pmatrix}
      \sigma_{1}^2+x^{\top}x \\ Bx
    \end{pmatrix}
    \right\|_{2}^{2} \bigg/ \|\tilde{x}\|_{2}^2 \geq \|\tilde{x}\|_{2}^2=\sigma_{1}^2+x^{\top}x,
  \end{align*}
  which leads to $x=\mathbf{0}$. Therefore, we have $U^{\top}AV=\begin{pmatrix}
    \sigma_{1} & \mathbf{0}^{\top} \\
    \mathbf{0} & B
  \end{pmatrix}$. Now \eqref{wsvd} can be obtained by using mathematical induction. Since $U$ and $V$ are invertible, it follows that $\mathrm{rank}(A)=r=\mathrm{rank}(\Sigma_r)$. The proof is completed.
\end{proof}

The main difference between the two forms \eqref{svd1} and \eqref{wsvd} is that the right vectors $\{v_i\}$ are $M$-orthonormal. We call \eqref{wsvd} the \textit{weighted SVD} (WSVD) of $A$ with weight matrix $M$. For $M=I_n$, it is the same as the standard SVD. Similar to the standard SVD, the WSVD can be used to analyze and solve many problems with a non-standard 2-norm. Specifically, it can be used to analyze and develop efficient algorithms for linear ill-posed problems with the $\|x\|_{M}^2$ regularization term.

Note that $V^{\top}MV=I_n$ implies that $VV^{\top}MV=V$. Multiplying $V^{-1}$ from the right-hand side, we get $VV^{\top}=M^{-1}$. Therefore, from \eqref{wsvd} we get
\begin{equation}\label{wsvd1}
  A = U\begin{pmatrix}
    \Sigma_r & \mathbf{0} \\
    \mathbf{0} & \mathbf{0}
  \end{pmatrix}V^{\top}M,
\end{equation}
From \eqref{wsvd} and \eqref{wsvd1} we have
\begin{align}
  Av_i &= \sigma_{i}u_i, \\
  A^{\top}u_i &= \sigma_{i}Mv_{i}.
\end{align}
Also, we have the WSVD expansion form of $A$: $A=\sum_{i=1}^{r}\sigma_{i}u_{i}v_{i}^{\top}M$. Besides, the range space and null space of $A$can be explicitly written as
\begin{align*}
  \mathcal{R}(A) &= \mathrm{span}\{u_1,\dots,u_r\}, \\
  \mathcal{N}(A) &= \mathrm{span}\{v_{r+1},\dots,v_n\},
\end{align*}
where $\{v_{r+1},\dots,v_n\}$ is an $M$-orthonormal basis of $\mathcal{N}(A)$.

Using WSVD, the Eckhart-Young-Mirsky theorem for low-rank approximation of a matrix under the $\|\cdot\|_{M,2}$ norm has the following form.

\begin{theorem}
  For any integer $1\leq k<r$, we have
  \begin{equation}
    \min_{\mathrm{X}\leq k}\|A-X\|_{M,2} \geq \sigma_{k+1},
  \end{equation}
  where the minimum can be achieved if $X=A_{k}:=\sum_{i=1}^{k}\sigma_{i}u_{i}v_{i}^{\top}M$.
\end{theorem}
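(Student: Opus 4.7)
The plan is to adapt the classical Eckart--Young--Mirsky proof to the weighted setting by splitting the argument into a matching upper and lower bound, with the $M$-inner product playing the role of the standard inner product on the domain. The upper bound will verify that $\|A-A_k\|_{M,2}=\sigma_{k+1}$, while the lower bound will use a dimension-counting argument against an arbitrary rank-$k$ competitor $X$.

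For the upper bound, I would first apply Proposition~\ref{lem1} to reduce $\|A-A_k\|_{M,2}$ to $\|U^{\top}(A-A_k)V\|_2$. Using $V^{\top}MV=I_n$, the $M$-orthonormality of the columns of $V$ gives $v_i^{\top}Mv_j=\delta_{ij}$, so $A_kV=\sum_{i=1}^{k}\sigma_i u_i e_i^{\top}$. Subtracting this from $U^{\top}AV=\Sigma$ given by Theorem~\ref{thm:wsvd} leaves a diagonal matrix whose nonzero entries are precisely $\sigma_{k+1},\dots,\sigma_r$, and whose spectral norm is therefore exactly $\sigma_{k+1}$.

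For the lower bound, fix any $X$ with $\mathrm{rank}(X)\leq k$, so that $\dim\mathcal{N}(X)\geq n-k$. Since $W:=\mathrm{span}\{v_1,\dots,v_{k+1}\}$ has dimension $k+1$, dimension counting inside $\mathbb{R}^n$ forces a nonzero vector $z\in\mathcal{N}(X)\cap W$. Writing $z=\sum_{i=1}^{k+1}c_i v_i$, the $M$-orthonormality gives $\|z\|_M^2=\sum_i c_i^2$, while $Av_i=\sigma_i u_i$ together with $U^{\top}U=I_m$ gives $\|Az\|_2^2=\sum_i \sigma_i^2 c_i^2\geq \sigma_{k+1}^2\sum_i c_i^2$. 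Because $(A-X)z=Az$, this yields $\|A-X\|_{M,2}\geq \|Az\|_2/\|z\|_M\geq \sigma_{k+1}$, which together with the preceding step completes the proof.

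I do not anticipate a serious obstacle: the main subtlety is keeping track of which inner product lives on which side, since the numerators are measured with the standard $2$-norm while the denominators are measured with the $M$-norm. Every invocation of orthonormality must therefore use the appropriate identity ($V^{\top}MV=I_n$ on the domain and $U^{\top}U=I_m$ on the codomain). Once Proposition~\ref{lem1} has been used to strip away the outer orthogonal transformations, the argument follows the classical template almost verbatim.
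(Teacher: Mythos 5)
Your proposal is correct and follows essentially the same route as the paper: the upper bound via Proposition~\ref{lem1} reducing $\|A-A_k\|_{M,2}$ to the spectral norm of a diagonal remainder, and the lower bound via dimension counting to find a nonzero $z\in\mathcal{N}(X)\cap\mathrm{span}\{v_1,\dots,v_{k+1}\}$. The only (harmless) difference is that you expand $z=\sum_{i=1}^{k+1}c_iv_i$ directly, which lets you skip the paper's use of $VV^{\top}=M^{-1}$ to evaluate $\sum_{i=1}^{k+1}(v_i^{\top}Mz)^2=\|z\|_M^2$.
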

\begin{proof}
  First, if $X=A_{k}:=\sum_{i=1}^{k}\sigma_{i}u_{i}v_{i}^{\top}M$, we have $\mathrm{rank}(X)=k$, and by Proposition \ref{lem1} we have
  \begin{align*}
    \|A-X\|_{M,2} = \|U^{\top}(A-X)V\|_2 
    = \left\|U^{\top}U\begin{pmatrix}
      \mathbf{0} & & & & \\
        & \sigma_{k+1} & & & \\
        &  & \ddots & &  \\
        &  &  & \sigma_{r} & \\
        &  &  &    & \mathbf{0}
    \end{pmatrix}V^{\top}MV\right\|_{2} = \sigma_{k+1} .
  \end{align*} 
  Thus, we only need to prove $\|A-X\|_{M,2}\leq \sigma_{k+1}$ for any $X\in\mathbb{R}^{m\times n}$ with $\mathrm{rank}(X)=k$. Fot such $X$ we have $\mathrm{dim}(\mathcal{N}(X))=n-k$, thereby we can find $M$-orthonormal vectors $\{w_{1},\dots,w_{n-k}\}$ such that $\mathcal{N}(X)=\mathrm{span}\{w_{1},\dots,w_{n-k}\}$. Notice that $\mathcal{N}(X)\cap \mathrm{span}\{v_{1},\dots,v_{k+1}\}\neq\{\mathbf{0}\}$ since the sum of dimensions of these two subspaces is $n+1$. Let $z$ be a nonzero vector in the intersection of the above two subspaces and $\|z\|_{M}=1$. Using the WSVD of $A$, we get
  \begin{equation*}
    Az = \sum_{i=1}^{r}\sigma_{i}u_{i}(v_{i}^{\top}Mz)
       = \sum_{i=1}^{k+1}\sigma_{i}u_{i}(v_{i}^{\top}Mz),
  \end{equation*}
  since $z$ is $M$-orthogonal to $v_{k+2},\dots,v_{n}$. It follows that
  \begin{align*}
    \|A-X\|_{M,2}^2 \geq \frac{\|(A-X)z\|_{2}^2}{\|z\|_{M}^2} = \|Az\|_{2}^2
    = \sum_{i=1}^{k+1}\sigma_{i}^{2}(v_{i}^{\top}Mz)^2
    \geq \sigma_{k+1}^2\sum_{i=1}^{k+1}(v_{i}^{\top}Mz)^2 .
  \end{align*}
  Since $\|z\|_{M}^2=z^{\top}Mz=z^{\top}MVV^{\top}Mz=\|V^{\top}Mz\|_{2}^2$, where we used $VV^{\top}=M^{-1}$, we have 
  \begin{equation*}
    \sum_{i=1}^{k+1}(v_{i}^{\top}Mz)^2 = \sum_{i=1}^{n}(v_{i}^{\top}Mz)^2
    = \|V^{\top}Mz\|_{2}^2 = \|z\|_{M}^2=1.
  \end{equation*}
  We finnaly obtain $\|A-X\|_{M,2}\geq\sigma_{k+1}$.
\end{proof}

For the rank-deficient least squares problem, we can write the solution set by using the WSVD, which is convenient to find the unique minimum $\|\cdot\|_M$ solution.
\begin{theorem}\label{thm6}
  For the linear least squares problems $\min_{x\in\mathbb{R}^n}\|Ax-b\|_2$, the set of all solutions is
  \begin{equation}\label{sol_set}
    \mathcal{X} = \sum_{i=1}^{r}\frac{u_{i}^{\top}b}{\sigma_{i}}v_{i} + \mathrm{span}\{v_{r+1},\dots,v_n\},
  \end{equation}
  and the unique solution to
  \begin{equation}\label{weightnorm_ls}
    \min_{x\in\mathcal{X}}\|x\|_{M}, \ \ \ \mathcal{X} = \{x\in\mathbb{R}^n: \|Ax-b\|_2=\min\}
  \end{equation}
  is 
  \begin{equation}
    x_{*} = \sum_{i=1}^{r}\frac{u_{i}^{\top}b}{\sigma_{i}}v_{i}
  \end{equation}
\end{theorem}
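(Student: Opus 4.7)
The plan is to change coordinates via the WSVD so that both the residual $\|Ax-b\|_2$ and the weighted norm $\|x\|_M$ are simultaneously diagonalized. Since $V$ is $M$-orthogonal and hence invertible, every $x\in\mathbb{R}^n$ can be written uniquely as $x=Vy$ for some $y\in\mathbb{R}^n$. The key observation is that $\|x\|_M^2 = y^\top V^\top M V y = \|y\|_2^2$, so the $M$-norm in the $x$-coordinates becomes the standard $2$-norm in the $y$-coordinates. Simultaneously, from $U^\top A V = \Sigma$ and the $2$-orthogonality of $U$, we have $\|Ax-b\|_2 = \|U^\top(AVy - b)\|_2 = \|\Sigma y - U^\top b\|_2$.

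Setting $c = U^\top b$ and using the block form of $\Sigma$ from Theorem \ref{thm:wsvd}, the residual squared expands as
\begin{equation*}
  \|\Sigma y - c\|_2^2 \;=\; \sum_{i=1}^{r}(\sigma_i y_i - c_i)^2 \;+\; \sum_{i=r+1}^{m} c_i^2.
\end{equation*}
The second sum is independent of $y$, while the first is minimized precisely when $y_i = c_i/\sigma_i = u_i^\top b/\sigma_i$ for $i=1,\dots,r$; the coordinates $y_{r+1},\dots,y_n$ are free. Transforming back by $x=Vy$ and recalling $\mathcal{N}(A)=\mathrm{span}\{v_{r+1},\dots,v_n\}$ (established earlier in Section 3), I obtain the solution set
\begin{equation*}
  \mathcal{X} \;=\; \sum_{i=1}^{r}\frac{u_i^\top b}{\sigma_i}v_i \;+\; \mathrm{span}\{v_{r+1},\dots,v_n\},
\end{equation*}
which is (3.12).

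For the minimum $M$-norm solution, I use $\|x\|_M^2 = \|y\|_2^2 = \sum_{i=1}^n y_i^2$. Over $\mathcal{X}$ the first $r$ coordinates $y_i$ are fixed to $c_i/\sigma_i$, so minimizing $\|y\|_2^2$ forces $y_{r+1}=\cdots=y_n=0$, and this minimizer is unique. Transforming back yields $x_* = \sum_{i=1}^{r}(u_i^\top b/\sigma_i)v_i$.

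I expect no real obstacle here: the proof is a clean reduction, and the only point requiring a moment of care is verifying that the $M$-orthogonality of $V$ turns the weighted norm on $x$ into the Euclidean norm on $y$, which is exactly what makes the WSVD (as opposed to the standard SVD) the right tool for the minimum $\|\cdot\|_M$-norm problem. Everything else parallels the classical SVD-based proof of the minimum $2$-norm least squares solution.
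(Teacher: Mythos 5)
Your proof is correct and follows essentially the same route as the paper: both diagonalize the problem via the WSVD, split the residual into a part determined by the first $r$ components and a constant part, and then use the $M$-orthonormality of the $v_i$ to see that the minimum $\|\cdot\|_M$ solution is obtained by killing the free components. The only cosmetic difference is that you substitute $x=Vy$ at the outset while the paper works directly with $V_r^{\top}Mx$, but since $V^{\top}Mx=y$ these are the same computation.
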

\begin{proof}
  Write $U$ and $V$ as $U=(U_r, U_{r,\perp})$ and $V=(V_r, V_{r,\perp})$.
  Using the WSVD of $A$, we have
  \begin{align*}
    \|Ax-b\|_2^2 
    &= 
    \left\|U\begin{pmatrix}
      \Sigma_r & \mathbf{0} \\
      \mathbf{0} & \mathbf{0}
    \end{pmatrix}\begin{pmatrix}
      V_{r}^{\top} \\ V_{r,\perp}^{\top}
    \end{pmatrix}
    Mx-b \right\|_{2}^2
    = \left\|\begin{pmatrix}
      \Sigma_r V_{r}^{\top}Mx \\ \mathbf{0}
    \end{pmatrix} - 
    \begin{pmatrix}
      U_{r}^{\top}b \\  U_{r,\perp}^{\top}b
    \end{pmatrix}
    \right\|_{2}^2 \\
    &= \|\Sigma_r V_{r}^{\top}Mx-U_{r}^{\top}b\|_{2}^2 + \|U_{r,\perp}^{\top}b\|_{2}^2 .
  \end{align*}
  Therefore, the minimizers of $\min_{x\in\mathbb{R}^n}\|Ax-b\|_2$ are the solutions to $\Sigma_r V_{r}^{\top}Mx=U_{r}^{\top}b$, which is equivalent to $V_{r}^{\top}Mx=\Sigma_r^{-1}U_{r}^{\top}b$. An obvious solution to the above equation is $x_{*}=V_{r}\Sigma_r^{-1}U_{r}^{\top}b=\sum_{i=1}^{r}\frac{u_{i}^{\top}b}{\sigma_{i}}v_{i}$. Since $\mathcal{N}(V_{r}^{\top}M)=\mathrm{span}\{v_{r+1},\dots,v_n\}$, we have the expression of $\mathcal{X}$ as \eqref{sol_set}

  On the other hand, for any $x\in\mathcal{X}$ sucht that $x=x_{*}+\sum_{r+1}^{n}\gamma_{i}v_i$, since $v_i$ are mutual $M$-orthogonal, we have
  \begin{align*}
    \|x\|_{M}^2 = \|x_{*}\|_{M}^2 + \sum_{i=r+1}^{n}\gamma_{i}^2 \geq \|x_{*}\|_{M}^2,
  \end{align*}
  where ``='' holds if and only if $\gamma_{r+1}=\cdots=\gamma_{n}=0$. Therefore \eqref{weightnorm_ls} has the unique solution $x_{*}$.
\end{proof}

If we let $A^{\dag_{M}}=V\Sigma^{\dag}U^{\top}$, where $\Sigma^{\dag}:=\begin{pmatrix}
  \Sigma_r^{-1} & \mathbf{0} \\
      \mathbf{0} & \mathbf{0}
\end{pmatrix}\in\mathbb{R}^{n\times m}$. Then we can express $x_{*}$ as $x_{*}=A^{\dag_{M}}b$. This is a similar expression to the smallest 2-norm solution to $\min_{x\in\mathbb{R}^n}\|Ax-b\|_2$. But unfortunately, $A^{\dag_{M}}$ is not a real pseudoinverse of a matrix since $(AA^{\dag_{M}})^{\top}\neq AA^{\dag_{M}}$. Thus, we do not discuss it anymore.

%%-------------------------------------------------------------
\section{Iterative algorithm for WSVD and applications}\label{sec4}
The WSVD of $A$ is actually the singular value expansion of the linear compact operator $\mathcal{A}: (\mathbb{R}^{n}, \langle\cdot,\cdot\rangle_{M}) \to (\mathbb{R}^{m}, \langle\cdot,\cdot\rangle_{2})$ that has a finite rank. This motivates us to apply the GKB process to $\mathcal{A}$ to approximate several dominant WSVD components of $A$; see \cite{caruso2019convergence} for the GKB process for compact linear operators. 

Starting from a nonzero vector $b\in\mathbb{R}^{m}$\footnote{For the GKB process used in LSQR, it usually uses the right-hand side $b$ as the starting vector. However, for using the GKB process to calculate SVD or WSVD, it can use any nonzero vector in $\mathbb{R}^{m}$ as a starting vector.}, the GKB process proceeds based on the following recursive relations:
\begin{align}
	& \beta_1 p_1 = b, \label{GKB1} \\
	& \alpha_{i}q_i = \mathcal{A}^{*}p_i -\beta_i q_{i-1}, \label{GKB2} \\
	& \beta_{i+1}p_{i+1} = \mathcal{A}q_{i} - \alpha_i p_i, \label{GKB3}
\end{align}
where $\mathcal{A}$ is the adjoint of $\mathcal{A}$. The iteration proceeds as $i=1,2,\dots$, and we set $q_{0}:=\mathbf{0}$. From the definition of $\mathcal{A}$ we have $\mathcal{A}q_i=Aq_i$. In order to compute $\mathcal{A}^{*}$, we use the basic relation
\begin{equation*}
  \langle \mathcal{A}x, y\rangle_{2} = \langle x, \mathcal{A}^{*}y\rangle_{M}
\end{equation*}
which is equivalent to $(Ax)^{\top}y=x^{\top}\mathcal{A}^{*}My$ for any vectors $x\in\mathbb{R}^{n}$ and $y\in\mathbb{R}^{m}$. If follows that $\mathcal{A}^{*}=M^{-1}A^{T}$. Therefore, we obtain the GKB process of $\mathcal{A}^{*}$, as summarized in Algorithm \ref{alg1}. We name it as the weighted GKB process with weight matrix $M$.

\begin{algorithm}[htb]
	\caption{The $k$-step weighted GKB (WGKB)}\label{alg1}
	\begin{algorithmic}
    \Require Matrix $A\in\mathbb{R}^{m\times n}$, positive definite $M\in\mathbb{R}^{n\times n}$, nonzero $b\in\mathbb{R}^{m}$
    \Ensure $\{\alpha_i, \beta_i\}_{i=1}^{k+1}$, $\{p_i, q_i\}_{i=1}^{k+1}$
		\State Let $\beta_{1}=\|b\|_2$,  $p_{1}=b/\beta_{1}$ 
    \State Compute $\bar{s}=A^{\top}p_1$,  $s=M^{-1}\bar{s}$
    \State Compute $\alpha_1=(s^{\top}\bar{s})^{1/2}$, $q_1=s/\alpha_1$
		\For {$i=1,2,\dots,k$}
		\State $r=Aq_{i}-\alpha_{i}p_{i}$ 
		\State $\beta_{i+1}=\|r\|$,  $p_{i+1}=r/\beta_{i+1}$
		\State $\bar{s}=A^{\top}p_{i+1}-\beta_{i+1}Mq_{i}$, $s=M^{-1}\bar{s}$
    \State $\alpha_{i+1}=(s^{\top}\bar{s})^{1/2}$, $q_{i+1}=s/\alpha_{i+1}$
		\EndFor
	\end{algorithmic}
\end{algorithm}

Using the property of GKB for the compact operator $\mathcal{A}$, before the WGKB reaches the termination step, that is, $k_t:=\max_{i}\{\alpha_i\beta_i>0\}$, the $k$-step WGKB process generate two groups of vectors $\{p_1,\dots,p_{k+1}\}$ and $\{q_1,\dots,q_{k+1}\}$ that are 2- and $M$-orthornormal, respectively. If we let $P_{k+1}=(p_1,\dots,p_{k+1})$, $Q_{k}=(p_1,\dots,q_{k})$ and
\begin{equation}
	B_{k}=\begin{pmatrix}
		\alpha_{1} & & & \\
		\beta_{2} &\alpha_{2} & & \\
		&\beta_{3} &\ddots & \\
		& &\ddots &\alpha_{k} \\
		& & &\beta_{k+1}
		\end{pmatrix}\in  \mathbb{R}^{(k+1)\times k},
\end{equation}
then we have 
\begin{align}
  AQ_{k} &= P_{k+1}B_{k}, \label{gkb1} \\
  M^{-1}A^{\top}P_{k+1} &= Q_{k}B_{k}^{\top}+\alpha_{k+1}q_{k}e_{k+1}^{\top} , \label{gkb2}
\end{align}
where $e_{k+1}$ is the $(k+1)$-th column of $I_{k+1}$.
Therefore, $B_k$ is the projection of $A$ onto subspaces $\mathrm{span}\{P_{k+1}\}$ and $\mathrm{span}\{Q_{k}\}$. 

We can expect to approximate several dominant WSVD components of $A$ by the SVD of $B_k$. Let the compact SVD of $B_k$ be
\begin{equation}
  B_k = Y_{k}\Theta_kH_{k}^{\top}, \ \ \ 
  \Theta_k = \mathrm{diag}\left(\theta_{1}^{(k)},\dots,\theta_{k}^{(k)}\right), \ \ \
  \theta_{i}^{(k)}>\dots>\theta_{k}^{(k)}>0 ,
\end{equation}
where $Y_{k}=\left(y_{1}^{(k)},\dots,y_{k}^{(k)}\right)\in\mathbb{R}^{(k+1)\times k}$ and $H_{k}=\left(h_{1}^{(k)},\dots,h_{k}^{(k)}\right)\in\mathbb{R}^{k\times k}$ are two orthornormal matrices. Then the approximation to the WSVD triplet $\left(\sigma_{i}, u_i, v_i\right)$ is $\left(\bar{\sigma}_{i}^{(k)},\bar{u}_{i}^{(k)},\bar{v}_{i}^{(k)}\right):=\left(\theta_{i}^{(k)}, P_{k+1}y_{i}^{(k)},Q_{k}h_{i}^{(k)}\right)$.

\begin{proposition}
  The approximated WSVD triplet based on WGKB satisfies
  \begin{align}
    A\bar{v}_{i}^{(k)}-\bar{\sigma}_{i}^{(k)}\bar{u}_{i}^{(k)} &= 0 , \\
    A^{\top}\bar{u}_{i}^{(k)}- \bar{\sigma}_{i}^{(k)}M\bar{v}_{i}^{(k)} &= \alpha_{k+1}Mq_{k+1}e_{k+1}^{\top} .
  \end{align}
\end{proposition}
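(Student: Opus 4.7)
The plan is to derive both identities by substituting the compact SVD $B_k = Y_k\Theta_k H_k^\top$ into the matrix-form recurrences \eqref{gkb1} and \eqref{gkb2} produced by WGKB, and then reading off the right $i$-th column.

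For the range-side identity, I would simply apply $AQ_k = P_{k+1}B_k$ to the vector $h_i^{(k)}$. Using $H_k^\top h_i^{(k)} = e_i$ (since the columns of $H_k$ are orthonormal), the product collapses to $A Q_k h_i^{(k)} = P_{k+1}Y_k\Theta_k e_i = \theta_i^{(k)} P_{k+1} y_i^{(k)}$. Recalling the definitions $\bar{\sigma}_i^{(k)}=\theta_i^{(k)}$, $\bar{v}_i^{(k)}=Q_kh_i^{(k)}$ and $\bar{u}_i^{(k)}=P_{k+1}y_i^{(k)}$ then gives the first identity directly.

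For the domain-side identity, the natural recurrence \eqref{gkb2} is phrased in terms of $M^{-1}A^\top$ rather than $A^\top$, because the adjoint of $\mathcal{A}$ with respect to the $M$-inner product is $\mathcal{A}^*=M^{-1}A^\top$. I would multiply \eqref{gkb2} on the left by $M$ to obtain
\begin{equation*}
A^\top P_{k+1} = MQ_k B_k^\top + \alpha_{k+1}Mq_{k+1}e_{k+1}^\top,
\end{equation*}
and then apply both sides to $y_i^{(k)}$. Using $B_k^\top = H_k\Theta_k Y_k^\top$ and $Y_k^\top y_i^{(k)} = e_i$, the first term reduces to $\theta_i^{(k)} MQ_k h_i^{(k)} = \bar{\sigma}_i^{(k)} M\bar{v}_i^{(k)}$, while the second leaves the residual $\alpha_{k+1}\bigl(e_{k+1}^\top y_i^{(k)}\bigr)Mq_{k+1}$, which measures how much the last component of $y_i^{(k)}$ couples the $i$-th approximate triplet to the next WGKB basis vector.

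The only conceptual obstacle is keeping track of the weight matrix $M$ on the domain side: one must remember that it is $M^{-1}A^\top$, not $A^\top$, whose action is captured by the bidiagonal matrix $B_k^\top$, and therefore the exact recovery of $\bar{\sigma}_i^{(k)} M\bar{v}_i^{(k)}$ (with the $M$ in place) is what makes the residual take the clean form above. Once this adjointness point is in hand, the rest is a one-line manipulation using the SVD of $B_k$ and the orthonormality of $Y_k$ and $H_k$.
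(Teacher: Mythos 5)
Your argument is correct and is essentially the paper's own proof: both substitute the WGKB relations \eqref{gkb1} and \eqref{gkb2} (the latter premultiplied by $M$) and use the SVD of $B_k$, your identities $H_k^\top h_i^{(k)}=e_i$ and $Y_k^\top y_i^{(k)}=e_i$ being just the expanded form of the relations $B_kh_i^{(k)}=\theta_i^{(k)}y_i^{(k)}$ and $B_k^\top y_i^{(k)}=\theta_i^{(k)}h_i^{(k)}$ that the paper invokes. Your residual $\alpha_{k+1}\bigl(e_{k+1}^\top y_i^{(k)}\bigr)Mq_{k+1}$ also correctly includes the factor $e_{k+1}^\top y_i^{(k)}$ that the proposition's displayed equation omits (a typo in the statement), matching what the paper's proof actually derives.
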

\begin{proof}
  These two relations can be verified by directly using \eqref{gkb1} and \eqref{gkb2}:
  \begin{equation*}
    A\bar{v}_{i}^{(k)}-\bar{\sigma}_{i}^{(k)}\bar{u}_{i}^{(k)} 
    = AQ_{k}h_{i}^{(k)} - \theta_{i}^{(k)}P_{k+1}y_{i}^{(k)}
    = P_{k+1}\left(B_{k}h_{i}^{(k)}-\theta_{i}^{(k)}y_{i}^{(k)} \right) = 0
  \end{equation*}
  and 
  \begin{align*}
    A^{\top}\bar{u}_{i}^{(k)}- \bar{\sigma}_{i}^{(k)}M\bar{v}_{i}^{(k)}
    &= A^{\top}P_{k+1}y_{i}^{(k)}-\theta_{i}^{(k)}MQ_{k}h_{i}^{(k)}
    = M\left(Q_{k}B_{k}^{\top}+\alpha_{k+1}q_{k}e_{k+1}^{\top}\right)y_{i}^{(k)} -\theta_{i}^{(k)}MQ_{k}h_{i}^{(k)} \\
    &= MQ_k(B_{k}^{\top}y_{i}^{(k)}-\theta_{i}^{(k)}h_{i}^{(k)}) + \alpha_{k+1}Mq_{k}e_{k+1}^{\top}y_{i}^{(k)} \\
    &= \alpha_{k+1}Mq_{k}e_{k+1}^{\top}y_{i}^{(k)} .
  \end{align*}  
  The proof is completed.
\end{proof}

Therefore, the triplet $\left(\bar{\sigma}_{i}^{(k)},\bar{u}_{i}^{(k)},\bar{v}_{i}^{(k)}\right)$ can be accepted as a satisfied WSVD triplet at the iteration that $\left|\alpha_{k+1}q_{k}e_{k+1}^{\top}y_{i}^{(k)}\right|$ is sufficiently small. This easily computed quantity can be used as a stopping criterion for iteratively computing WSVD triplets.

To solve the large-scale least square problem \eqref{weightnorm_ls}, one method is to transform it to the standard one:
\begin{equation*}
  \min_{z\in\mathcal{Z}}\|z\|_2, \ \ \ 
  \mathcal{Z} = \{z\in\mathbb{R}^{n}: \|AL_{M}^{-1}z-b\|_2\}
\end{equation*}
by the substitution $z=L_{M}x$, where $L_M$ is the Cholesky factor of $M$, i.e. $M=L_{M}^{\top}L_{M}$, and then use the LSQR algorithm to solve it. However, this transformation needs to compute the Cholesky factorization of $M$ in advance, which can be very costly for large-scale $M$. Noticing that the least square problem \eqref{weightnorm_ls} can be obtained by WSVD, that is $x_{*}=A^{\dag_M}b$, we can expect to iteratively compute $x_{*}$ based on the WGKB process of $A$ with starting vector $b$. Note from \eqref{GKB1} that $\beta P_{k+1}e_{1}=b$, where $e_1$ is the first column of $I_{k+1}$. If the WGKB process does not terminate until the $k$-th step, i.e. $k< k_t$, then $B_k$ has full column rank. In this case, we seek a solution to \eqref{weightnorm_ls} in the subspace $\mathrm{span}\{Q_{k}\}$. By letting $x=Q_{k}y$ with $y\in\mathbb{R}^{k}$, we have
\begin{equation*}
  \min_{x\in\mathrm{span}\{Q_{k}\}}\|Ax-b\|_2
  = \min_{y\in\mathbb{R}^{n}}\|AQ_ky-b\|_2
  = \min_{y\in\mathbb{R}^{n}}\|P_{k+1}B_{k}y-\beta P_{k+1}e_{1}\|_2
  = \min_{y\in\mathbb{R}^{n}}\|B_{k}y-\beta e_{1}\|_2
\end{equation*}
and $\|x\|_{M} = \|Q_{k}y\|_{M} = \|y\|_2$.
Therefore, the problem \eqref{weightnorm_ls} with $x\in\mathrm{span}\{Q_{k}\}$ becomes
\begin{equation}
  \min_{y\in\mathcal{Y}_k}\|y\|_2, \ \ \ 
  \mathcal{Y}_k = \{y\in\mathbb{R}^{k}: \|B_{k}y-\beta e_{1}\|_2=\min \}
\end{equation}
This is a standard linear least squares problem with minimum 2-norm, which has the unique solution $y_k=B_k^{\dag}\beta e_{1}$. Therefore, at the $k$-th iteration, we compute the iterative approximation to \eqref{weightnorm_ls}:
\begin{equation}\label{x_k}
  x_k = Q_{k}y_k, \ \ \ y_k=B_k^{\dag}\beta e_{1}.
\end{equation}

The above procedure is similar to the LSQR algorithm for standard 2-norm least squares problems. Moreover, the bidiagonal structure of $B_k$ allows us to design a recursive relation to update $x_k$ step by step without explicitly computing $B_k^{\dag}\beta e_{1}$ at each iteration; see \cite[Section 4.1]{Paige1982} for the similar recursive relation in LSQR. We summarized the iterative algorithm for iteratively solving \eqref{weightnorm_ls} in Algorithm \ref{alg2}, which is named the weighted LSQR (WLSQR) algorithm.

\begin{algorithm}[htb]
	\caption{Weighted LSQR (WLSQR)}\label{alg2}
	\begin{algorithmic}
    \Require Matrix $A\in\mathbb{R}^{m\times n}$, positive definite $M\in\mathbb{R}^{n\times n}$, vector $b\in\mathbb{R}^{m}$
    \Ensure Approximate solution to \eqref{weightnorm_ls}: $x_k$
		\State Compute $\beta_1p_1=b$, $\alpha_1q_1=M^{-1}A^{T}q_1$, 
    \State Set $x_0=\mathbf{0}$, $w_1=q_1$, $\bar{\phi}_{1}=\beta_1$, $\bar{\rho}_1=\alpha_1$
		\For {$i=1,2,\dots$ until convergence,}
    \State \textbf{(Applying the WGKB process)}
		\State $\beta_{i+1}p_{i+1}=Aq_{i}-\alpha_ip_{i}$ 
		\State $\alpha_{i+1}q_{i+1}=M^{-1}A^{\top}p_{i+1}-\beta_{i+1}q_{i}$
    \State \textbf{(Applying the Givens QR factorization to $B_k$)}
		\State $\rho_{i}=(\bar{\rho}_{i}^{2}+\beta_{i+1}^{2})^{1/2}$
		\State $c_{i}=\bar{\rho}_{i}/\rho_{i}$
    \State $s_{i}=\beta_{i+1}/\rho_{i}$
		\State$\theta_{i+1}=s_{i}\alpha_{i+1}$ 
    \State $\bar{\rho}_{i+1}=-c_{i}\alpha_{i+1}$
		\State $\phi_{i}=c_{i}\bar{\phi}_{i}$ 
    \State $\bar{\phi}_{i+1}=s_{i}\bar{\phi}_{i}$
    \State \textbf{(Updating the solution)}
		\State $x_{i}=x_{i-1}+(\phi_{i}/\rho_{i})w_{i} $
		\State $w_{i+1}=v_{i+1}-(\theta_{i+1}/\rho_{i})w_{i}$
		\EndFor
	\end{algorithmic}
\end{algorithm}

The following result shows that the WLSQR algorithm approaches the exact solution to \eqref{weightnorm_ls} as the algorithm proceeds.

\begin{theorem}\label{naive}
  If the WGKB process terminates at step $k_t=\max_{i}\{\alpha_{i}\beta_{i}>0\}$, then the iterative solution $x_{k_t}$ is the exact solution to \eqref{weightnorm_ls}.
\end{theorem}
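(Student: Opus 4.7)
The plan is to reduce the theorem to the classical finite-termination property of standard LSQR via a Cholesky change of variables. Writing $M = L_M^{\top} L_M$ and setting $\tilde{A} := AL_M^{-1}$, the substitution $z = L_M x$ converts \eqref{weightnorm_ls} into the standard problem $\min_{z\in\mathcal{Z}}\|z\|_2$ with $\mathcal{Z}=\{z: \|\tilde{A}z-b\|_2=\min\}$, whose unique solution is $z_*=\tilde{A}^{\dag}b$. By Theorem \ref{thm6} the corresponding $x_*=L_M^{-1}z_*$ is the unique solution of \eqref{weightnorm_ls}.

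Next, I would verify that Algorithm \ref{alg1} applied to $(A,M,b)$ is equivalent to the standard Golub--Kahan bidiagonalization applied to $(\tilde{A},b)$ under the correspondence $\tilde{p}_i = p_i$ and $\tilde{q}_i = L_M q_i$. The key identities are $\tilde{A}\tilde{q}_i = A q_i$ and $\tilde{A}^{\top}\tilde{p}_i = L_M^{-\top}A^{\top}p_i = L_M(M^{-1}A^{\top}p_i)$, together with the fact that the $M$-orthonormality of $\{q_i\}$ is equivalent to the 2-orthonormality of $\{\tilde{q}_i\}$, since $\tilde{q}_i^{\top}\tilde{q}_j = q_i^{\top}Mq_j$. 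These identities show that both processes generate the same scalars $\alpha_i,\beta_i$ and the same bidiagonal matrix $B_k$, and in particular terminate at the same index $k_t$; moreover $B_{k_t}$ has full column rank in either termination case ($\beta_{k_t+1}=0$ or $\alpha_{k_t+1}=0$), so $y_{k_t}=B_{k_t}^{\dag}\beta_1 e_1$ is well defined.

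With this correspondence, I would observe that the WLSQR iterate $x_k = Q_k y_k$ with $y_k = B_k^{\dag}\beta_1 e_1$ satisfies $L_M x_k = \tilde{Q}_k y_k$, which is precisely the standard LSQR iterate $z_k$ for $(\tilde{A},b)$ --- the Givens QR update in Algorithm \ref{alg2} acts on the shared matrix $B_k$ and therefore carries over unchanged. By the classical finite-termination result for LSQR in \cite{Paige1982}, $z_{k_t} = \tilde{A}^{\dag}b = z_*$, and hence $x_{k_t} = L_M^{-1}z_{k_t} = x_*$.

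The main obstacle is the bookkeeping: carefully checking sign/normalization conventions so that the WGKB and the standard GKB outputs align exactly under the change of variables, and confirming that the minimum-2-norm selection for $y_k$ among least squares solutions is preserved by the substitution. A direct alternative that avoids the Cholesky factor would be to show that $\mathrm{span}\{Q_{k_t}\}$ is invariant under $M^{-1}A^{\top}A$ at termination and contains $M^{-1}A^{\top}b$, hence contains $x_*$ via the WSVD expansion of Theorem \ref{thm6}; combined with the fact that, by construction, $x_{k_t}$ is the minimum-$\|\cdot\|_M$ element of $\mathrm{span}\{Q_{k_t}\}$ minimizing the residual, this would also yield $x_{k_t}=x_*$.
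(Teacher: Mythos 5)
Your proposal is correct, but it takes a genuinely different route from the paper. The paper proves the theorem directly in the weighted geometry: by Theorem \ref{thm6}, $x$ solves \eqref{weightnorm_ls} iff $Ax-b\perp\mathcal{R}(A)$ and $x\perp_M\mathrm{span}\{v_{r+1},\dots,v_n\}$; the first condition is verified from the WGKB recurrences and the normal equation $B_{k_t}^{\top}B_{k_t}y_{k_t}=B_{k_t}^{\top}\beta_1 e_1$, which leave exactly the term $\alpha_{k_t+1}\beta_{k_t+1}Mq_{k_t+1}e_{k_t}^{\top}y_{k_t}$ that vanishes at termination, and the second follows because $\mathrm{span}\{Q_k\}=\mathcal{K}_k(M^{-1}A^{\top}A,M^{-1}A^{\top}b)\subseteq M^{-1}\mathcal{N}(A)^{\perp}=\mathrm{span}\{v_1,\dots,v_r\}$. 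You instead transport the problem to standard form via $z=L_Mx$, verify that WGKB on $(A,M,b)$ coincides with ordinary GKB on $(AL_M^{-1},b)$ under $\tilde q_i=L_Mq_i$ (your identities check out, e.g.\ $L_MM^{-1}=L_M^{-\top}$, and the normalizations match since $\|q\|_M=\|L_Mq\|_2$), and then invoke the classical finite-termination/minimum-norm property of LSQR. This is essentially the equivalence the paper establishes later as Theorem \ref{thm5.1}; note the paper proves that theorem \emph{using} the proof of this one, so your argument inverts the logical order, which is legitimate since you derive the correspondence directly from the recurrences and no circularity arises. What each approach buys: the paper's proof is self-contained (no Cholesky factor, no appeal to the standard LSQR termination result, consistent with the paper's stated goal of avoiding $L_M$), while yours is shorter modulo the cited classical result and makes the link to the transformed standard-form problem explicit, at the cost of the bookkeeping you flag (which does go through, including full column rank of $B_{k_t}$ in both termination cases). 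Your sketched Cholesky-free alternative at the end is essentially the paper's actual argument.
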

  \begin{proof}
    By Theorem \ref{thm6}, a vector $x\in\mathbb{R}^{n}$ is the unique solution to \eqref{weightnorm_ls} if and only if
    \begin{equation*}
      Ax-b \perp \mathcal{R}(A), \ \ \ 
      x \perp_{M} \mathrm{span}\{v_{r+1},\dots,v_{n}\} .
    \end{equation*}
    Using the property of the GKB process of $\mathcal{A}$, the subspace $\mathrm{span}\{Q_k\}$ can be expressed as the Krylov subspace
    \begin{equation*}
      \mathrm{span}\{Q_k\} = \mathcal{K}_{k}(\mathcal{A}^{*}\mathcal{A},\mathcal{A}^{*}b)=\mathrm{span}\{(\mathcal{A}^{*}\mathcal{A})^{i}\mathcal{A}^{*}b\}_{i=0}^{k-1} = \mathrm{span}\{(M^{-1}A^{\top}A)^{i}M^{-1}A^{\top}b\}_{i=0}^{k-1}.
    \end{equation*} 
    For any $k\leq k_t$, since $x_k=Q_{k}y_k$, it follows
    \begin{align*}
      x_{k} \in \mathrm{span}\{(M^{-1}A^{\top}A)^{i}M^{-1}A^{\top}b\}_{i=0}^{k-1} \subseteq \mathcal{R}(M^{-1}A^{\top})
      = M^{-1}\mathcal{N}(A)^{\perp}.
    \end{align*}
    Using the WSVD of $A$, we have $\mathcal{N}(A)=\mathrm{span}\{v_{r+1},\dots,v_{n}\}$. For any $v\in\mathbb{R}^{n}$, it follows that 
    \begin{equation*}
      v\in M^{-1}\mathcal{N}(A)^{\perp} \Leftrightarrow Mv \in \mathcal{N}(A)^{\perp}
      \Leftrightarrow v^{\top}Mv_{i} = 0, \ i=r+1,\dots,n,
    \end{equation*}
    which leads to $M^{-1}\mathcal{N}(A)^{\perp}=\mathrm{span}\{v_{1},\dots,v_{r}\}$. Therefore, we get $x_k\in\mathrm{span}\{v_{1},\dots,v_{r}\}$ and thereby $x_k\perp_{M} \mathrm{span}\{v_{r+1},\dots,v_{n}\}$.
  
    To prove $Ax_{k_t}-b \perp \mathcal{R}(A)$, we only need to show $A^{\top}(Ax_{k_t}-b)=\mathbf{0}$. By \eqref{GKB2} and \eqref{GKB3}, we have
    \begin{align*}
      A^{\top}(Ax_{k_t}-b) 
      &= A^{\top}(AQ_{k_t}y_{k_t}-P_{k_t+1}\beta_{1}e_1) \\
      &= A^{\top}P_{k_t+1}(B_{k_t}y_{k_t}-\beta_{1}e_1) \\
      &= M(Q_{k_t}B_{k_t}^{\top}+\alpha_{k_t+1}q_{k_t+1}e_{k+1}^{\top})(B_{k_t}y_{k_t}-\beta_{1}e_1) \\
      &= M\left[Q_{k_t}(B_{k_t}^{\top}B_{k_t}y_{k_t}- B_{k_t}^{\top}\beta_{1}e_{1}) + \alpha_{k_t+1}\beta_{k_t+1}q_{k+1}e_{k_t}^{\top}y_{k_t}\right] \\
      &= \alpha_{k_t+1}\beta_{k_t+1}Mq_{k_t+1}e_{k_t}^{\top}y_{k_t},
    \end{align*}
    where we used $B_{k_t}^{\top}B_{k_t}y_{k_t}= B_{k_t}^{\top}\beta_{1}e_{1}$ since $y_{k_t}$ satisfies the normal equation of $\min_{y}\|B_{k_t}y-\beta_{1}e_1\|_2$. Since WGKB terminates at $k_t$, which means that $\alpha_{k_t+1}\beta_{k_t+1} = 0$, we have $A^{\top}(Ax_{k_t}-b)=\mathbf{0}$. 
  \end{proof}

%%-------------------------------------------------------------
\section{Using WSVD to analyze and solve linear ill-posed problems}\label{sec5}
For the Tikhonov regularization \eqref{tikh1} with the $\|x\|_{M}^2$ regularization term, if we have the Cholesky factorization $M=L_{M}^{\top}L_{M}$, this problem can be transformed into the standard-form Tikhonov regularization problems
$\min_{\bar{x}\in\mathbb{R}^{n}}\{\|AL_{M}^{-1}\bar{x}-b\|_{2}^2 + \lambda\|\bar{x}\|_{2}^2 \}$ by letting $\bar{x}=L_{M}x$. To avoid computing the Cholesky factorization of $M$, we can write its solution explicitly using the WSVD of $A$. 
\begin{theorem}
  The solution to the Tikhonov regularization \eqref{tikh1} can be written as
  \begin{equation}\label{tikh1_sol}
    x_{\lambda} =\sum_{i=1}^{r}\frac{\sigma_{i}^2}{\sigma_{i}^2+\lambda}\frac{u_{i}^{\top}b}{\sigma_i}v_i .
  \end{equation}
\end{theorem}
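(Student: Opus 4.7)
The plan is to reduce the minimization problem to its (unique) first-order optimality condition and then diagonalize the resulting linear system using the WSVD.

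First, I would observe that the objective $\|Ax-b\|_2^2+\lambda\|x\|_M^2$ is strongly convex in $x$ (since $M$ is symmetric positive definite and $\lambda>0$), so it has a unique minimizer characterized by its normal equation
\begin{equation*}
(A^\top A+\lambda M)x = A^\top b.
\end{equation*}
The task then becomes to solve this linear system and recognize that the solution admits the claimed WSVD-filtered expansion.

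Next I would substitute the WSVD in its second form \eqref{wsvd1}, namely $A=U\Sigma V^\top M$ and consequently $A^\top = MV\Sigma^\top U^\top$. The key identity I would use repeatedly is $VV^\top=M^{-1}$, which was derived in the paragraph following Theorem \ref{thm:wsvd} from $V^\top MV=I_n$. Using it, one gets $A^\top A = MV\Sigma^\top\Sigma V^\top M$ and $M = MVV^\top M$, so the coefficient matrix factors as
\begin{equation*}
A^\top A+\lambda M = MV(\Sigma^\top\Sigma+\lambda I_n)V^\top M,
\end{equation*}
and the right-hand side becomes $A^\top b = MV\Sigma^\top U^\top b$. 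Now the change of variables $y=V^\top Mx$ (which is invertible since both $V$ and $M$ are) diagonalizes the system into $(\Sigma^\top\Sigma+\lambda I_n)y=\Sigma^\top U^\top b$.

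The diagonal system reads $(\sigma_i^2+\lambda)y_i = \sigma_i\, u_i^\top b$ for $i=1,\dots,r$ and $\lambda y_i=0$ for $i=r+1,\dots,n$, giving $y_i=\sigma_i(u_i^\top b)/(\sigma_i^2+\lambda)$ for $i\le r$ and $y_i=0$ otherwise. Finally, inverting the change of variables via $x = M^{-1}V^{-\top}y = VV^\top V^{-\top}y = Vy$ (again using $VV^\top=M^{-1}$) yields
\begin{equation*}
x_\lambda = \sum_{i=1}^{r}\frac{\sigma_i\,u_i^\top b}{\sigma_i^2+\lambda}\,v_i
= \sum_{i=1}^{r}\frac{\sigma_i^2}{\sigma_i^2+\lambda}\,\frac{u_i^\top b}{\sigma_i}\,v_i,
\end{equation*}
which is exactly \eqref{tikh1_sol}. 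There is no real obstacle here; the only thing to be careful about is keeping track of the non-orthogonal nature of $V$ (it is $M$-orthogonal, not orthogonal) and consistently using the identity $VV^\top=M^{-1}$ both to factor $A^\top A+\lambda M$ and to pass from $y$ back to $x$.
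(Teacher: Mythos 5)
Your proof is correct and is essentially the same as the paper's: both reduce the problem to the diagonal system $(\Sigma^{\top}\Sigma+\lambda I_n)y=\Sigma^{\top}U^{\top}b$ via the WSVD change of variables $x=Vy$ (equivalently $y=V^{\top}Mx$) and then read off the filtered expansion. The only difference is the order of steps — the paper substitutes into the objective and then writes the normal equation in $y$, while you write the normal equation in $x$ first and then diagonalize it using $VV^{\top}=M^{-1}$ — which is an immaterial reordering.
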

\begin{proof}
  Since $V$ is an $M$-orthogonal matrix, we can let $x=Vy$ for any $x\in\mathbb{R}^{n}$ where $y\in\mathbb{R}^{n}$. Using relations $AV=U\Sigma$ and $V^{\top}MV=I_{n}$ in the WSVD of $A$, \eqref{tikh1} becomes 
\begin{equation*}
  \min_{y\in\mathbb{R}^n}\{\|U\Sigma y-b\|_{2}^2 + \lambda \|y\|_{2}^2\} .
\end{equation*}
The normal equation of this problem is
\begin{equation*}
  \left[(U\Sigma)^{\top}(U\Sigma)+\lambda I_{n}\right]y = (U\Sigma)^{\top}b \ \ \Leftrightarrow \ \ 
  (\Sigma^{\top}\Sigma+\lambda I_n)y = \Sigma^{\top}U^{\top}b,
\end{equation*}
which leads to the unique solution to \eqref{tikh1} as \eqref{tikh1_sol}.
\[x_{\lambda} = V(\Sigma^{\top}\Sigma+\lambda I_n)^{-1}\Sigma^{\top}U^{\top}b
= \sum_{i=1}^{r}\frac{\sigma_{i}^2}{\sigma_{i}^2+\lambda}\frac{u_{i}^{\top}b}{\sigma_i}v_i. \]
The proof is completed.
\end{proof}

The above expression of $x_{\lambda}$ is similar to \eqref{fil_sol}, where $\lambda$ should be chosen properly to filter out the noisy components.

To avoid computing the Cholesky factorization of $M$ and choosing a proper regularization parameter in advance, we consider the \textit{subspace projection regularization} (SPR) method following the idea in \cite[\S 3.3]{Engl2000}, which can be formed as
\begin{equation}\label{spr}
	\min_{x\in\bar{\mathcal{X}}_k}\|x\|_{M}, \ \ \bar{\mathcal{X}}_k = \{x: \min_{x\in\mathcal{S}_k}\|Ax-b\|_{2}\}.
\end{equation}

\begin{rem}
  The above SPR method is a generalization of the iterative regularization method corresponding to the $\|x\|_{2}^2$ regularization term. For example, the LSQR method with early stopping can be written as
  \begin{equation*}
    \min_{x\in\bar{\mathcal{X}}_k}\|x\|_{2}, \ \ \ \bar{\mathcal{X}}_k = \{x: \min_{x\in\mathcal{S}_k}\|Ax-b\|_{2}\}, \ \ 
    \mathcal{S}_k=\mathcal{K}_{k}(A^{\top}A,A^{\top}b) .
  \end{equation*}
  The success of the SPR method highly depends on the choice of solution subspaces $\mathcal{S}_k$, which should be elaborately constructed to incorporate the prior information about the desired solution encoded by the regularizer $\|x\|_{M}^2$. For the LSQR method, the solution subspaces $\mathcal{K}_{k}(A^{\top}A,A^{\top}b)$ can only deal with the $\|x\|_{2}^2$ regularization term. This motivates us to develop a new iterative process to construct solution subspaces to incorporate prior information encoded by $\|x\|_{M}^2$.
\end{rem}

\begin{rem}
  For any choice of a $k$-dimensional $\mathcal{S}_k$, there exists a unique solution to \eqref{spr}. To see it, let $x=W_ky$ with $y\in\mathbb{R}^{k}$ be any vector in $\mathcal{S}_k$, where $W_k\in\mathbb{R}^{n\times k}$ whose columns are $M$-orthonormal and span $\mathcal{S}_k$. Then the solution to \eqref{spr} satisfies $x_k=W_ky_k$, where $y_k$ is the solution to
  \begin{equation*}
    \min_{y\in\mathcal{Y}_k}\|y\|_{2}, \ \ \mathcal{Y}_k = \{y: \min_{y\in\mathbb{R}^{k}}\|AW_ky-b\|_{2}\} .
  \end{equation*}
This problem has a unique solution $y_k=(AW_k)^{\dag}b$. Therefore, there exists a unique solution to \eqref{spr}.
\end{rem}

Using the WSVD of $A$, if we choose the $k$-th solution subspace in \eqref{spr} as $\mathcal{S}_k=\mathrm{span}\{v_1,\dots,v_k\}$, then the solution to \eqref{spr} is $x_k=V_ky_k$, where $y_k$ is the solution to
\begin{equation*}
  \min_{y\in\mathcal{Y}_k}\|y\|_{2}, \ \ \mathcal{Y}_k = \{y: \min_{y\in\mathbb{R}^{k}}\|U_k\Sigma_ky-b\|_{2}\} .
\end{equation*}
Note that $U_k\Sigma_k$ has full column rank for $1\leq k \leq r$. Therefore, $\mathcal{Y}_k$ has only one element, which is $y_k=\Sigma_{k}^{-1}U_{k}^{\top}b$, thereby 
$$x_k=V_k\Sigma_{k}^{-1}U_{k}^{\top}b=\sum_{i=1}^{k}\frac{u_{i}^{\top}b}{\sigma_{i}}v_i.$$
Comparing this result with Theorem \ref{thm6}, we find that $x_k$ can be obtained by truncating the first $k$ components of $x_{*}$. Thus, we name this form of $x_k$ as the truncated WSVD (TWSVD) solution. By truncating the above solution at a proper $k$, the TWSVD solution can capture the main information corresponding to the dominant right weighted singular vectors $v_i$ while discarding the highly amplified noisy vectors corresponding to others. 

From the above investigation, those dominant $v_i$ play an important role in the regularized solution, since they contain the desirable information about $x$ encoded by the regularizer $\|x\|_{M}^2$. As has been shown that WGKB can be used to approximate the WSVD triplets of $A$, this motivates us to design iterative regularization algorithms based on WGKB. This can be achieved by setting the $k$-th solution subspace as $\mathcal{S}_k=\mathrm{span}\{Q_k\}$. Following the same procedure for deriving WLSQR, the problem \eqref{spr} becomes
\begin{equation*}
  \min_{y\in\mathcal{Y}_k}\|y\|_{2}, \ \ \mathcal{Y}_k = \{y: \min_{y\in\mathbb{R}^{k}}\|B_ky-\beta_1e_1\|_{2}\} ,
\end{equation*}
which has the solution given by \eqref{x_k}. Therefore, the SPR method with $\mathcal{S}_k=\mathrm{span}\{Q_k\}$ is actually the WLSQR method. One important difference from solving the ordinary least squares problem is that here the iteration should be early stopped. This can be seen from Theorem \ref{naive} since the algorithm eventually converges to the naive solution to \eqref{weightnorm_ls}. This is the typical \textit{semi-convergence} behavior for SPR methods: as the iteration proceeds, the iterative solution first gradually approximate to $x_{\text{true}}$, then the solution will deviate far from $x_{\text{true}}$ and eventually converges to $A^{\dag_M}b$ \cite[\S 3.3]{Engl2000}. This is because the solution subspace will contain more and more noisy components as it gradually expands. The iteration at which the corresponding solution has the smallest error is called the semi-convergence point. Note that the iteration number $k$ in SPR plays a similar role as the regularization parameter in Tikhonov regularization. Here we adapt two criteria for choosing regularization parameters to estimate the semi-convergence point.

\paragraph*{Two early stopping rules}
\begin{enumerate}
  \item For the Gaussian white noise $e$, if an estimate of $\|e\|_2$ is known, one criterion for determining the early stopping iteration is the discrepancy principle (DP), which states that the discrepancy between the data and predicted output, which is $\|Ax_k-b\|_2$, should be of the order of $\|e\|_2$ \cite{Morozov1966}. Following the derivation of the procedure for updating $x_k$ in \cite{Paige1982}, we have 
  $$\|Ax_k-b\|_2=\|B_ky_k-\beta_1e_1\|=\bar{\phi}_{k+1}.$$ 
  Note that $\|Ax_k-b\|_2$ decreases monotonically since $x_k$ minimizes $\|Ax_k-b\|_2$ in the gradually expanding subspace $\mathcal{S}_k$. Therefore, following DP, we should stop the iteration at the first $k$ that satisfies
  \begin{equation}\label{discrepancy}
    \bar{\phi}_{k+1} \leq \tau\|e\|_2 < \bar{\phi}_{k}
  \end{equation}
  with $\tau>1$ slightly, such as $\tau=1.01$. Typically, the early stopping iteration determined by DP is slightly smaller than the semi-convergence point, thereby the corresponding solution is slightly over-smoothed.
  \item The L-curve (LC) criterion is another early stopping rule that does not need an estimate on $\|e\|_2$ \cite{Hansen1992}. In this method, the log-log scale the curve
  \begin{equation}
    \left(\log\|Ax_k-b\|_{2},\log\|x_k\|_{M}\right) 
    = \left(\log\bar{\phi}_{k+1},\log(\|x_k\|_{M})\right),
  \end{equation}
  is plotted, which usually has a characteristic `L' shape. The iteration corresponding to the corner of the L-curve, which is defined as the point of maximum curvature of the L-curve in a log-log plot, is usually a good early stopping iteration.
\end{enumerate}

The whole process of the WGKB-based SPR iterative algorithm is summarized in Algorithm \ref{alg3}.

\begin{algorithm}[htb]
	\caption{WLSQR with early stopping}\label{alg3}
	\begin{algorithmic}[1]
		\Require Matrix $A\in\mathbb{R}^{m\times n}$, positive definite $M\in\mathbb{R}^{n\times n}$, vector $b\in\mathbb{R}^{m}$
    \Ensure Final regularized solution corresponding to \eqref{spr}
		\For{$k=1,2,\ldots,$}
    \State Compute $\alpha_k$, $\beta_k$, $p_k$, $q_k$ by WGKB
		\State Update $x_k$ from $x_{k-1}$
    \State Compute the $\|Ax_k-b\|_2$ and $\|x\|_{M}$
		\If{Early stopping criterion is satisfied}   \Comment{DP or LC}
		\State Estimate the semi-convergence point as $k_1$
		\State Terminate the iteration to get $x_{k_1}$
		\EndIf
		\EndFor
	\end{algorithmic}
\end{algorithm}

To show the effectiveness of WLSQR for regularizing \eqref{ill-posed1} with regularizer $\|x\|_{M}^2$, we give the following result.

\begin{theorem}\label{thm5.1}
  Let the Cholesky factorization of $M$ be $M=L_{M}^{\top}L_{M}$. Then the $k$-th iterative solution of WLSQR for \eqref{spr} is $x_k=L_{M}^{-1}\bar{x}_k$, where $\bar{x}_k$ is the $k$-th LSQR solution of $\min_{\bar{x}\in\mathbb{R}^{n}}\|AL_{M}^{-1}\bar{x}-b\|_2$.
\end{theorem}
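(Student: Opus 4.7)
The plan is to show that WLSQR applied to $A$ with the $M$-inner product coincides, under the change of variables $\bar{x}=L_M x$, with standard LSQR applied to the transformed matrix $\bar{A}:=AL_M^{-1}$ and right-hand side $b$. Since both algorithms are fully determined by their underlying bidiagonalization, the key is to verify that the WGKB recursion for $(A,M,b)$ and the standard GKB recursion for $(\bar{A},b)$ produce the same scalars $\alpha_i,\beta_i$ and the same $p_i$'s, while the $q_i$'s are related by $\bar{q}_i=L_M q_i$. Once this is established, the bidiagonal matrices coincide, the coordinate vectors $y_k$ coincide, and the relation $x_k=L_M^{-1}\bar{x}_k$ drops out immediately.

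First I would set up the induction. The base cases are easy: $\bar{\beta}_1\bar{p}_1=b=\beta_1 p_1$ gives $\bar{\beta}_1=\beta_1$, $\bar{p}_1=p_1$, and a short calculation using $M^{-1}=L_M^{-1}L_M^{-T}$ shows that
\[
\bar{\alpha}_1^2=\|L_M^{-T}A^\top p_1\|_2^2=(A^\top p_1)^\top M^{-1}(A^\top p_1)=s^\top\bar{s}=\alpha_1^2,
\]
with the corresponding identity $L_M q_1=\bar{q}_1$ following from $L_M M^{-1}=L_M^{-T}$. For the inductive step, the short-recurrence for $\bar{p}_{i+1}$ reads $\bar{\beta}_{i+1}\bar{p}_{i+1}=\bar{A}\bar{q}_i-\bar{\alpha}_i\bar{p}_i=A L_M^{-1}(L_M q_i)-\alpha_i p_i=Aq_i-\alpha_i p_i$, matching the WGKB recursion exactly. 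For $\bar{q}_{i+1}$, the crucial identity is $L_M^{-T}M=L_M$, which converts
\[
\bar{A}^\top\bar{p}_{i+1}-\bar{\beta}_{i+1}\bar{q}_i=L_M^{-T}A^\top p_{i+1}-\beta_{i+1}L_M q_i=L_M^{-T}\bigl(A^\top p_{i+1}-\beta_{i+1}Mq_i\bigr)=L_M^{-T}\bar{s}.
\]
Taking 2-norms gives $\bar{\alpha}_{i+1}^2=\bar{s}^\top M^{-1}\bar{s}=\alpha_{i+1}^2$, and dividing through yields $\bar{q}_{i+1}=L_M q_{i+1}$, completing the induction.

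From the induction I would conclude that $\bar{B}_k=B_k$, $\bar{P}_{k+1}=P_{k+1}$, and $\bar{Q}_k=L_M Q_k$. The standard $k$-th LSQR iterate is $\bar{x}_k=\bar{Q}_k\bar{y}_k$ with $\bar{y}_k$ the minimum-norm solution of $\min_y\|\bar{B}_k y-\bar{\beta}_1 e_1\|_2$. Since $\bar{B}_k=B_k$ and $\bar{\beta}_1=\beta_1$, this minimizer is exactly the $y_k$ defined in \eqref{x_k}. Therefore
\[
\bar{x}_k=\bar{Q}_k y_k=L_M Q_k y_k=L_M x_k,
\]
which is the claimed identity $x_k=L_M^{-1}\bar{x}_k$.

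The main obstacle, though modest, is the algebraic bookkeeping that guarantees the non-standard inner-product normalizations in WGKB (the computations $\alpha_i=(s^\top\bar{s})^{1/2}$ with $\bar{s}=A^\top p_i-\beta_i Mq_{i-1}$, $s=M^{-1}\bar{s}$) really do reproduce the Euclidean normalizations used in standard GKB for $\bar{A}$. Everything hinges on the two identities $L_M M^{-1}=L_M^{-T}$ and $L_M^{-T}M=L_M$, together with the fact that the $q_i$'s are $M$-orthonormal precisely when the $L_M q_i$'s are $2$-orthonormal. Once these are in place, the rest is a straightforward parallel reading of the two algorithms.
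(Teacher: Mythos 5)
Your proof is correct, but it takes a genuinely different route from the paper. You establish the result at the level of the recursions: by induction you show that WGKB applied to $(A,M,b)$ is exactly the image under $L_M$ of standard GKB applied to $(AL_M^{-1},b)$ — same $\alpha_i,\beta_i$, same $p_i$, and $\bar{q}_i=L_Mq_i$ — using the identities $M^{-1}=L_M^{-1}L_M^{-\top}$, $L_MM^{-1}=L_M^{-\top}$ and $L_M^{-\top}M=L_M$; hence $\bar{B}_k=B_k$, $\bar{P}_{k+1}=P_{k+1}$, $\bar{Q}_k=L_MQ_k$, the projected problems coincide, and $\bar{x}_k=L_Mx_k$ follows. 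The paper instead argues at the subspace level: it characterizes the $k$-th LSQR iterate as the minimizer over the Krylov subspace $\bar{\mathcal{S}}_k=\mathcal{K}_k\bigl((AL_M^{-1})^{\top}(AL_M^{-1}),(AL_M^{-1})^{\top}b\bigr)$, shows $L_M^{-1}\bar{\mathcal{S}}_k=\mathrm{span}\{(M^{-1}A^{\top}A)^iM^{-1}A^{\top}b\}_{i=0}^{k-1}=\mathrm{span}\{Q_k\}$, and concludes by uniqueness of the subspace-constrained least squares solution. Your argument is longer but yields a strictly stronger conclusion — the full algorithmic equivalence of the bases and bidiagonal matrices, which also explains, e.g., why the residual quantities $\bar{\phi}_{k+1}$ and the projected problems agree — whereas the paper's subspace argument is shorter and independent of how the bases are normalized, at the cost of having to invoke uniqueness of the constrained minimizer separately. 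One small point common to both: the identification implicitly assumes the processes have not broken down (i.e., $k<k_t$ so all $\alpha_i\beta_i>0$ and $B_k$ has full column rank), which you should state when asserting that the normalizations and the minimizer $y_k$ are well defined and unique.
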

\begin{proof}
  The $k$-th LSQR solution of $\min_{\bar{x}\in\mathbb{R}^{n}}\|AL_{M}^{-1}\bar{x}-b\|_2$ is the solution of the subspace constrained least squares problem
  \begin{equation*}
    \min_{\bar{x}\in\bar{\mathcal{S}}_k}\|AL_{M}^{-1}\bar{x}-b\|_2, \ \ \ 
    \bar{\mathcal{S}}_k = \mathcal{K}_k\left((AL_{M}^{-1})^{\top}(AL_{M}^{-1}), (AL_{M}^{-1})^{\top}b\right) ,
  \end{equation*}
  where 
  \begin{equation*}
    \mathcal{K}_k\left((AL_{M}^{-1})^{\top}(AL_{M}^{-1}), (AL_{M}^{-1})^{\top}b\right) 
    = \mathrm{span}\left\{(L_{M}^{-\top}A^{\top}AL_{M}^{-1})^{i}L_{M}^{-\top}A^{\top}b\right\}_{i=0}^{k-1}
    = L_{M}^{-\top}\mathrm{span}\left\{(A^{\top}AM^{-1})^{i}A^{\top}b\right\}_{i=0}^{k-1} .
  \end{equation*}
  Therefore, $x_k=L_{M}^{-1}\bar{x}_k$ is the solution of the problem $\min_{x\in L_{M}^{-1}\bar{\mathcal{S}}_k}\|Ax-b\|_2$. From the proof of Proposition \ref{naive}, we have
  \begin{equation*}
    L_{M}^{-1}\bar{\mathcal{S}}_k = M^{-1}\mathrm{span}\left\{(A^{\top}AM^{-1})^{i}A^{\top}b\right\}_{i=0}^{k-1} 
    = \mathrm{span}\{(M^{-1}A^{\top}A)^{i}M^{-1}A^{\top}b\}_{i=0}^{k-1}
    = \mathcal{K}_{k}(\mathcal{A}^{*}\mathcal{A},\mathcal{A}^{*}b),
  \end{equation*}
  which is the $k$-th solution subspace $\mathcal{S}_k$ generated by WGKB. By writting any vector in $\mathcal{S}_k$ as $x=Q_ky$ with $y\in\mathbb{R}^{k}$, it is easy to verify that $\min_{x\in\mathcal{S}_k}\|Ax-b\|_2$ has the unqie solution. It follows that $x_k$ is the $k$-th WLSQR solution of \eqref{spr}.
\end{proof}

From this theorem, we find that WLSQR has the same effect as first transforming the original problem to $\min_{\bar{x}}\|AL_{M}^{-1}\bar{x}-b\|_2$ and then regularizing it. Therefore, this approach makes full use of the information encoded by the regularizer $\|x\|_{M}^2$, due to the elaborately constructed solution subspaces by the WGKB process.

%%-------------------------------------------------------------------------
\section{Numerical experiments}\label{sec6}
We consider the Fredholm integral equation of the first kind as \eqref{fred0}. The aim is to recover the unknown $f(t)$ from the noisy observation $g(s)$. We chose the following four examples to perform the numerical experiments.

\paragraph*{Example 1.} 
This example is chosen from \cite{Hansen1998} with the name {\sf shaw}. It models a one-dimensional image restoration problem using the Fredholm integral equation \eqref{fred0}, where the kernel $K$ and solution $f$ are given by
\begin{align*}
	K(s, t) &= (\cos s + \cos t)^2 \left(\frac{\sin u}{u}\right)^2, \ \ \
	u = \pi(\sin s + \sin t) ,  \\
	f(t) &= 2\exp\left(-6(t-0.8)^2\right) + \exp\left(-2(t+0.5)^2\right) .
\end{align*}
where $t\in[-\pi/2, \pi/2]$ and $s\in[-\pi/2,\pi/2]$.

\paragraph*{Example 2.}
This example is Phillips' famous test problem \cite{phillips1962technique}. Define the function
\begin{equation*}
  \phi(x) = 
  \begin{cases}
    1+\cos(\frac{\pi x}{3}), \ \ |x|<3 \\
    0, \ \  |x|\geq 3
  \end{cases}.
\end{equation*}
Then the kernel $K$, the solution $f$ and the exact observation are given by
\begin{align*}
  K(s,t) &= \phi(s-t) , \\
  f(t)   &= \phi(t),  
  % g(s)   &= \left(6-|s|\right)\left(1+\frac{1}{2}\cos\left(\frac{\pi s}{3}\right)+\frac{9}{2\pi}\sin\left(\frac{\pi|s|}{3} \right) \right)
\end{align*}
where $t\in[-6, 6]$ and $s\in[-6,6]$.

\paragraph*{Example 3.}
This test problem is constructed by ourselves. Define the kernel function and true solution as
\begin{align*}
  K(s,t) &= e^{st}, \\
  f(t)   &= e^t \cos t
\end{align*}
where $t\in[0, 1]$ and $s\in[0,1]$.

\paragraph*{Example 4.}
This test problem is constructed by ourselves. Define the kernel function and true solution as
\begin{align*}
  K(s,t) &=
  \begin{cases}
    s(1-t), \ \ s<t  \\
    t(1-s), \ \ s\geq t
  \end{cases},  \\
  f(t)   &= t-2t^2+t^3 .
\end{align*}
where $t\in[0, 1]$ and $s\in[0,1]$.

To discretize the Fredholm integral equation \eqref{fred0}, we partition the interval $[t_1, t_2]$ into $2l$ uniform subintervals to get $n=2l+1$ grid points $t_1=p_1<p_2<\cdots<p_{n-1}<p_n=t_2$. The whole integral is partitioned as
\begin{equation*}
  \int_{t_1}^{t_2}K(s,t)f(t)\mathrm{d}t =
  \sum_{i=1}^{l}\int_{p_{2i-1}}^{p_{2i+1}}K(s,t)f(t)\mathrm{d}t,
\end{equation*}
where each integral is approximated by Simpson's rule
\begin{equation*}
  \int_{p_{2i-1}}^{p_{2i+1}}K(s,t)f(t)\mathrm{d}t \approx
  \frac{p_{2i+1}-p_{2i-1}}{6}\left[K(s,p_{2i-1})+4K(s,p_{2i})+K(s,p_{2i+1})\right] .
\end{equation*}
Therefore, the whole integral is approximated as
\begin{equation*}
  \int_{t_1}^{t_2}K(s,t)f(t)\mathrm{d}t \approx
  \sum_{i=1}^{n}w_iK(s,p_{i}),
\end{equation*}
with weights
\begin{equation*}
  \frac{h}{3}\{1,4,2,4,2,4,\dots,2,4,1\}, \ \ \ h=(t_2-t_1)/n .
\end{equation*}
The observations are selected from $m$ uniform points in $[s_1, s_2]$ to get an $m$-dimensional vector. The task is to recover the true vector $x_{\text{true}}=(f(p_1),\dots,f(p_n))^{\top}$ from the noisy observation $b$ constructed as follows:
\begin{equation}\label{disc_inv}
  b = Ax +e,
\end{equation}
where $e\in\mathbb{R}^{m}$ is a discrete Gaussian white noise vector, and $A$ is the discretized kernel:
\begin{equation}
  A = \begin{pmatrix} 
     K(s_1,t_1)w_1 &K(s_1,t_2)w_2  & \cdots & K(s_1,t_n)w_n \\
  K(s_2,t_1)w_1 &K(s_2,t_2)w_2  & \cdots & K(s_2,t_n)w_n \\
      \vdots & \vdots &   \ddots & \vdots \\
  K(s_m,t_1)w_1 &K(s_m,t_2)w_2 & \cdots & K(s_m,t_n)w_n \\
      \end{pmatrix} \in \mathbb{R}^{m\times n}.
\end{equation} 
The scale of the noise is controlled by the noise level $\varepsilon:=\|e\|_2/\|Ax_{\text{true}}\|_2$, which may vary for different test examples. The properties of the test examples are shown in Table \ref{tab1}. The discretized true solutions and noisy observations with $\varepsilon=10^{-2}$ are shown in Figure \ref{fig1}.

From the above discretization, the data fidelity term and regularization term for the discrete ill-posed linear system \eqref{disc_inv} should be $\|Ax-b\|_{2}^2$ and $\|x\|_{M}^2$, respectively, where $M$ is the weight matrix 
\begin{equation}
  M = \frac{h}{3}\mathrm{diag}(1,4,2,4,2,4,\dots,2,4,1) .
\end{equation} 

\begin{table}[]
	\centering
	\caption{Properties of the four test examples}
	\scalebox{1.0}{
	\begin{tabular}{lllll}
		\toprule
		Problem     & Example 1  & Example 2 & Example 3  & Example 4  \\
		\midrule
		$m\times n$  & $2500\times 2001$ & $3000\times 2501$ & $3500\times 3001$ & $4000\times 3501$  \\
		Condition number & $5.89\times 10^{18}$ & $2.14\times 10^9$ & $5.98\times 10^{18}$ & $1.27\times 10^{7}$  \\
		\bottomrule
	\end{tabular}}
	\label{tab1}
\end{table}

\begin{figure}[htbp]
	\centering
	\subfloat[]{\label{fig:1a}
  \includegraphics[width=0.25\textwidth]{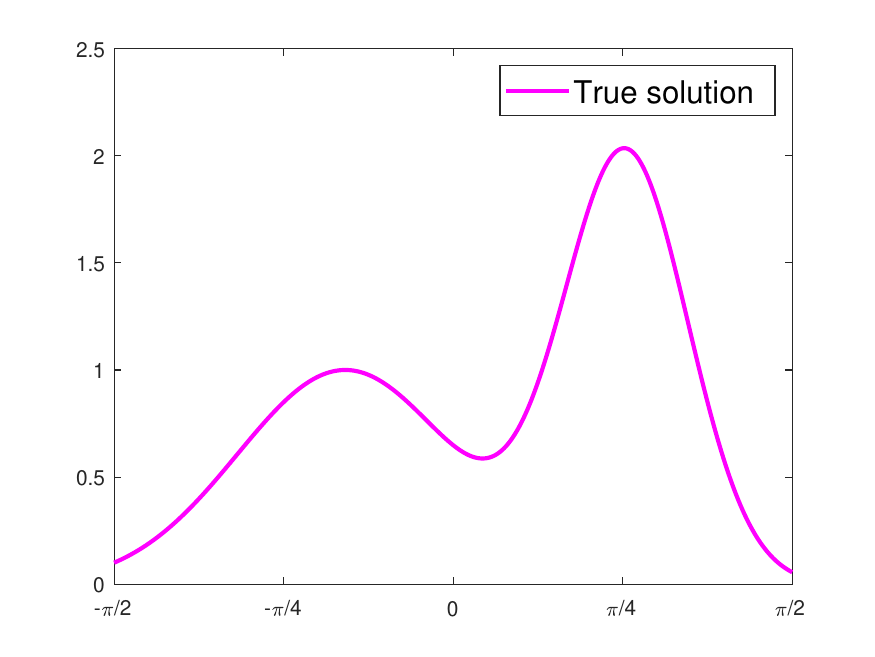}}
	\subfloat[]{\label{fig:1b}
  \includegraphics[width=0.25\textwidth]{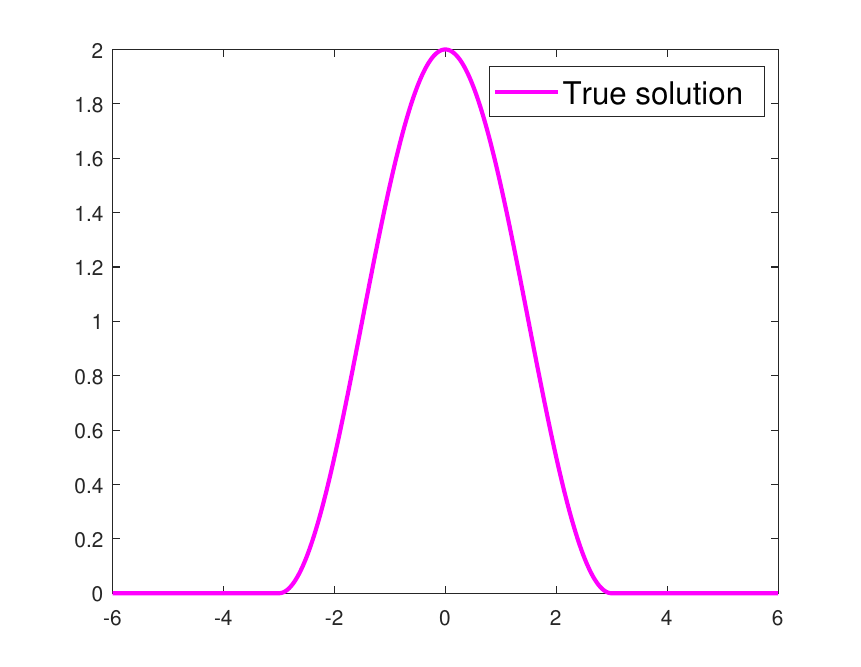}}
	\subfloat[]{\label{fig:1c}
  \includegraphics[width=0.25\textwidth]{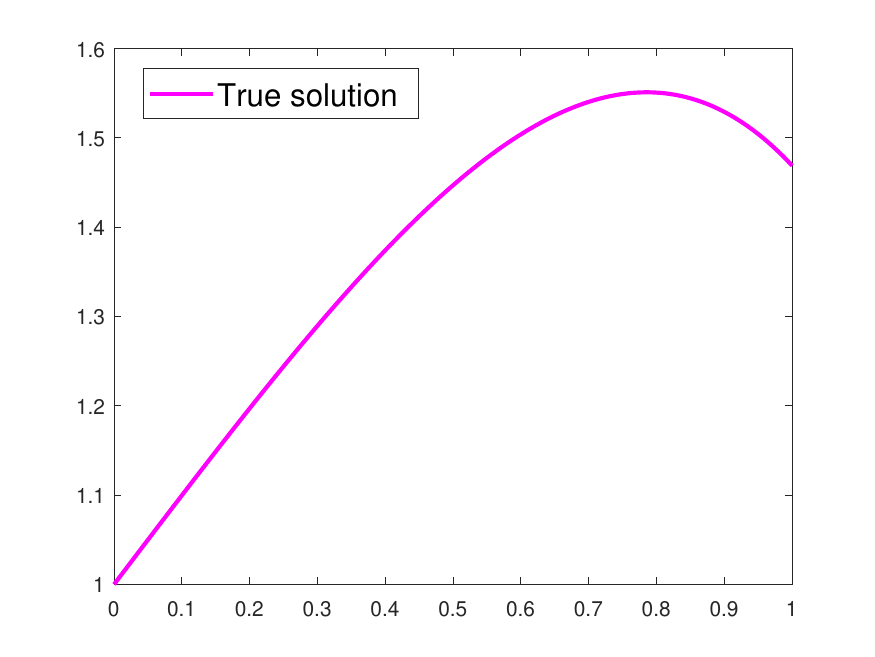}}
	\subfloat[]{\label{fig:1d}
  \includegraphics[width=0.25\textwidth]{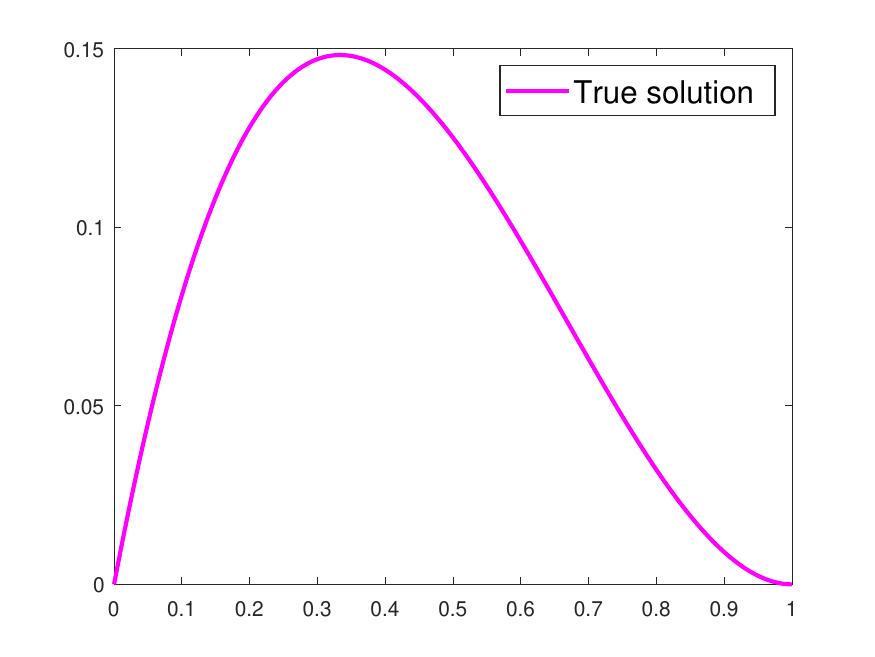}}
  \vspace{-2mm}
	\subfloat[]{\label{fig:1e}
  \includegraphics[width=0.25\textwidth]{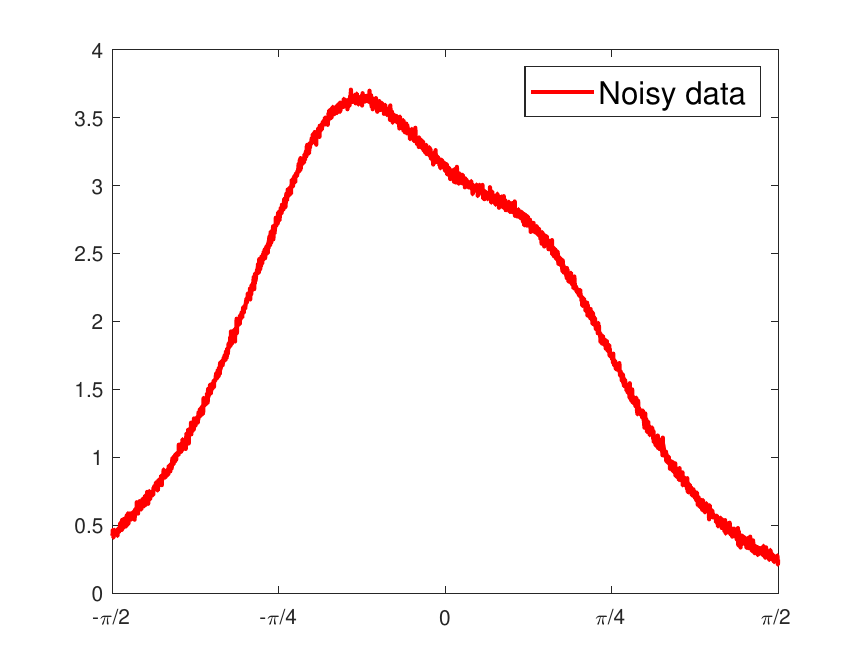}}
  \subfloat[]{\label{fig:1f}
  \includegraphics[width=0.25\textwidth]{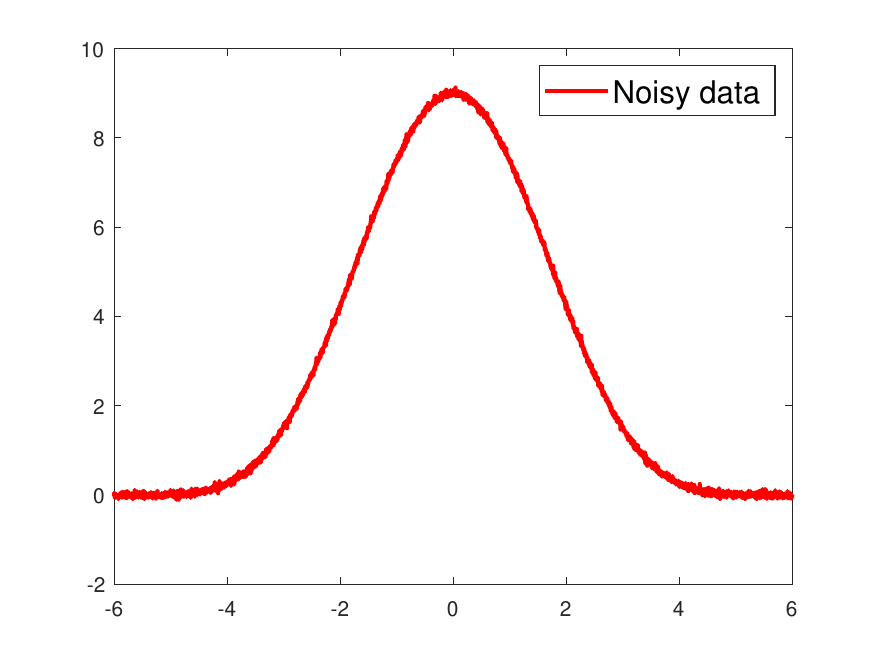}}
	\subfloat[]{\label{fig:1g}
  \includegraphics[width=0.25\textwidth]{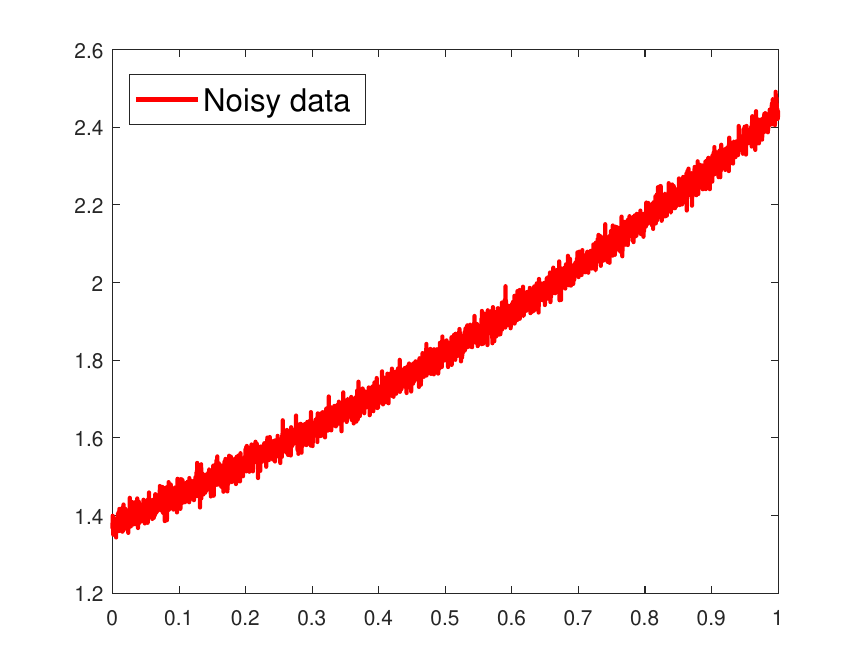}}
	\subfloat[]{\label{fig:1h}
  \includegraphics[width=0.25\textwidth]{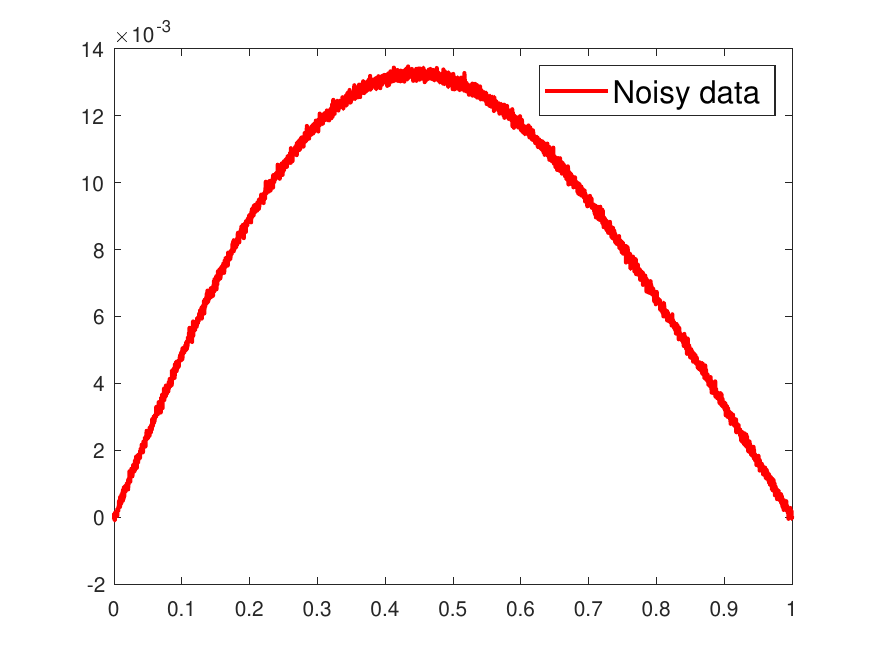}}
	\caption{The true solutions and corresponding noisy observations. The noise level for all four test examples is $\varepsilon=10^{-2}$. From left to right are: (a),(e) Example 1; (b),(f) Example 2; (c),(g) Example 3; (d),(h) Example 4.}
	\label{fig1}
\end{figure}

We demonstrate the performance of WLSQR for regularizing the four linear ill-posed problems. The standard LSQR method is used as a comparison, where the convergence behaviors of the two methods are shown by plotting the variation of relative reconstruction error $ \|x_k-x_{\text{true}}\|_2/\|x_{\text{true}}\|_2$ with respect to $k$. To further show the effectiveness of WLSQR, we also use $x_{\text{true}}$ to find the optimal Tikhonov regularization parameter $\lambda_{opt}$ for \eqref{tikh1} and the corresponding solution $x_{\lambda_{opt}}$, that is $\lambda_{opt}=\min_{\lambda>0}\|x_{\lambda}-x_{\text{true}}\|_{2}$. We use this optimal solution as a baseline for comparing the two methods.  All the experiments are performed using MATLAB R2023b.

%------------- Experimental figures and comments -----------------------------
\begin{figure}[htbp!]
	\centering
	\subfloat[]{\label{fig:2a}
  \includegraphics[width=0.33\textwidth]{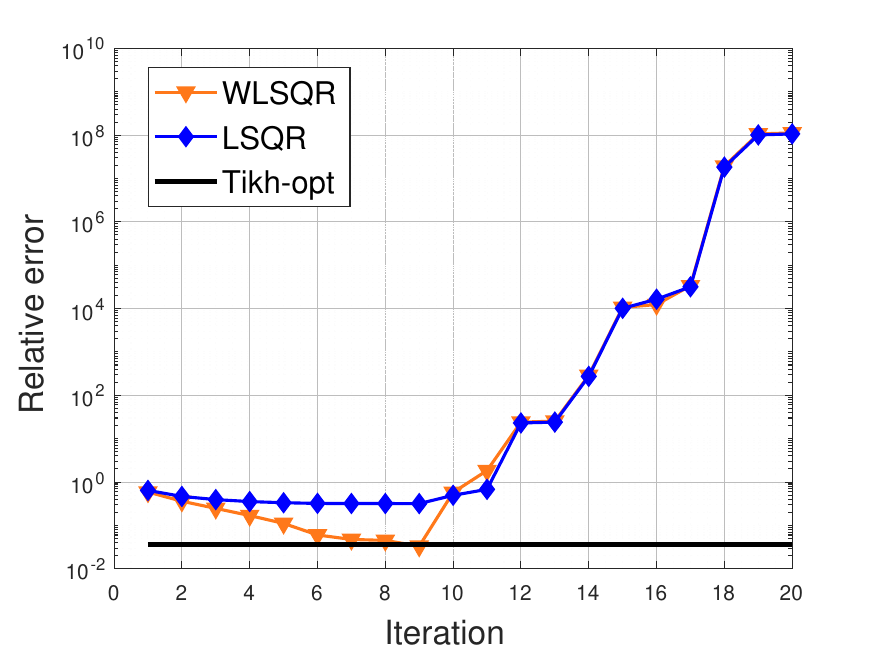}}
	\subfloat[]{\label{fig:2b}
  \includegraphics[width=0.33\textwidth]{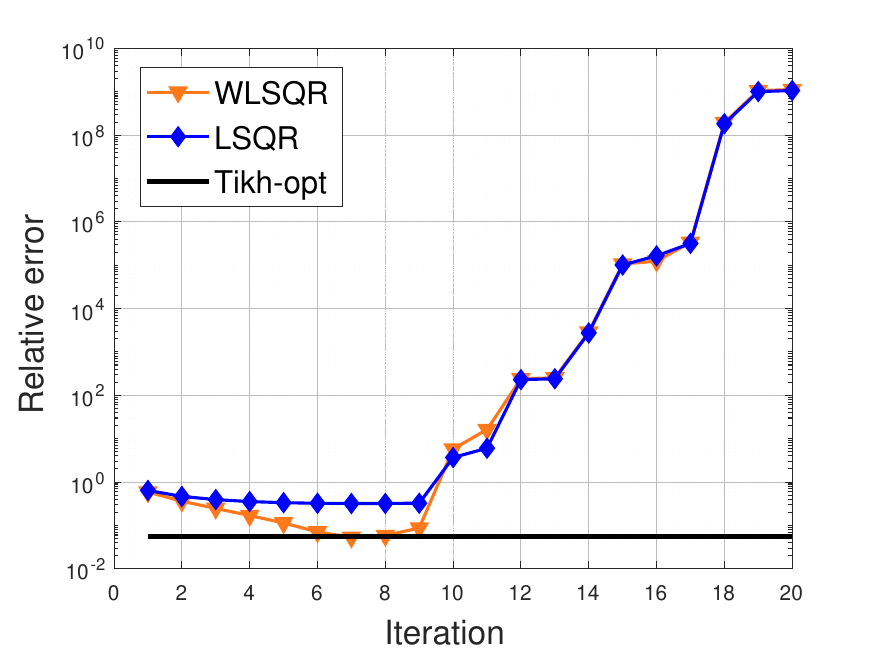}}
	\subfloat[]{\label{fig:2c}
  \includegraphics[width=0.33\textwidth]{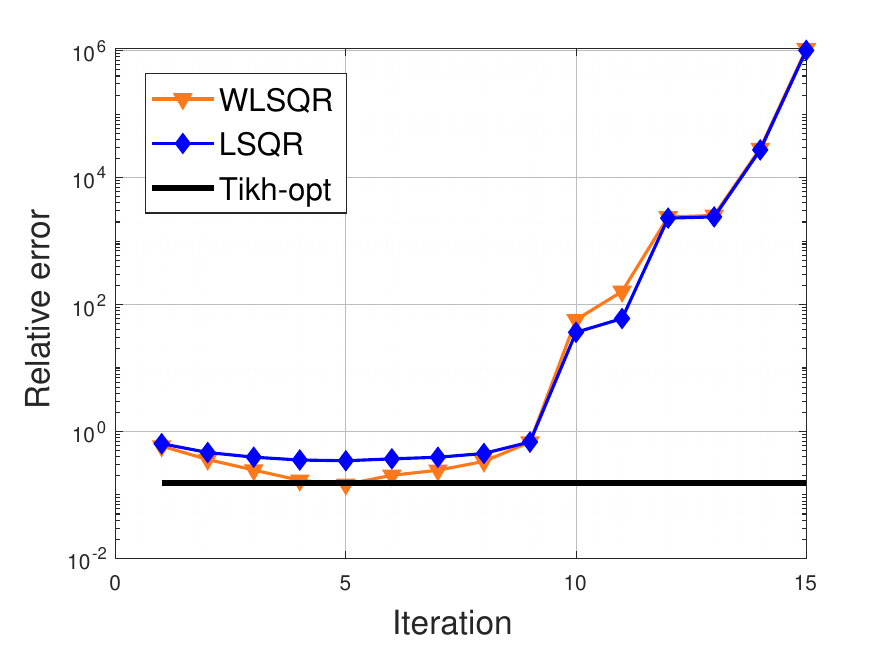}}
  \vspace{-2mm}
	\subfloat[]{\label{fig:2d}
  \includegraphics[width=0.33\textwidth]{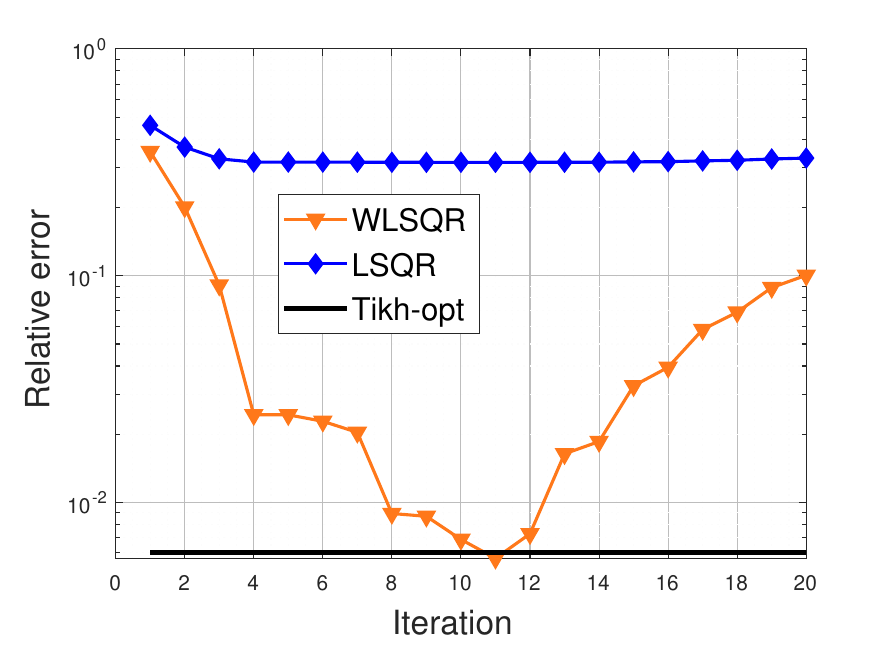}}
	\subfloat[]{\label{fig:2e}
  \includegraphics[width=0.33\textwidth]{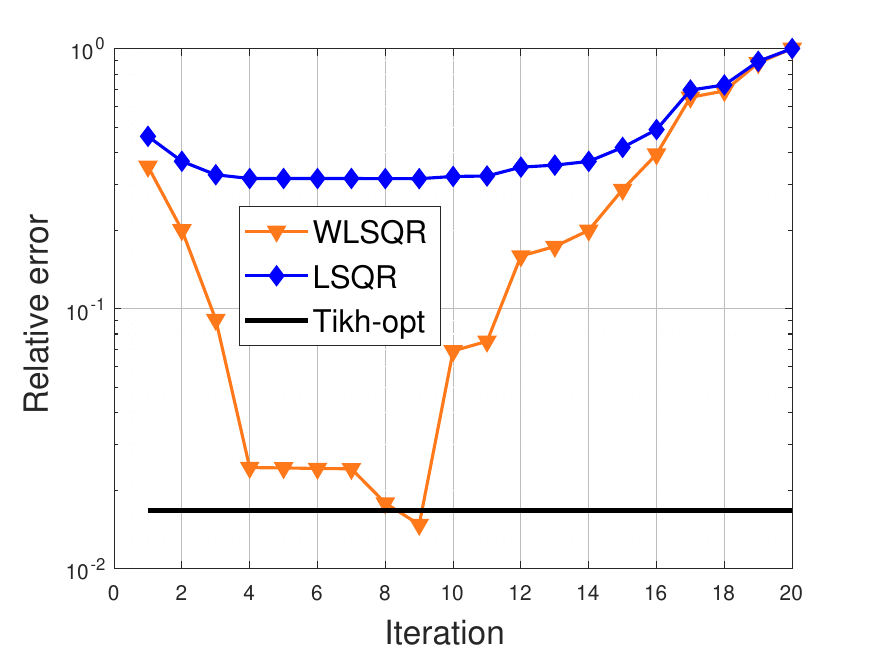}}
	\subfloat[]{\label{fig:2f}
  \includegraphics[width=0.33\textwidth]{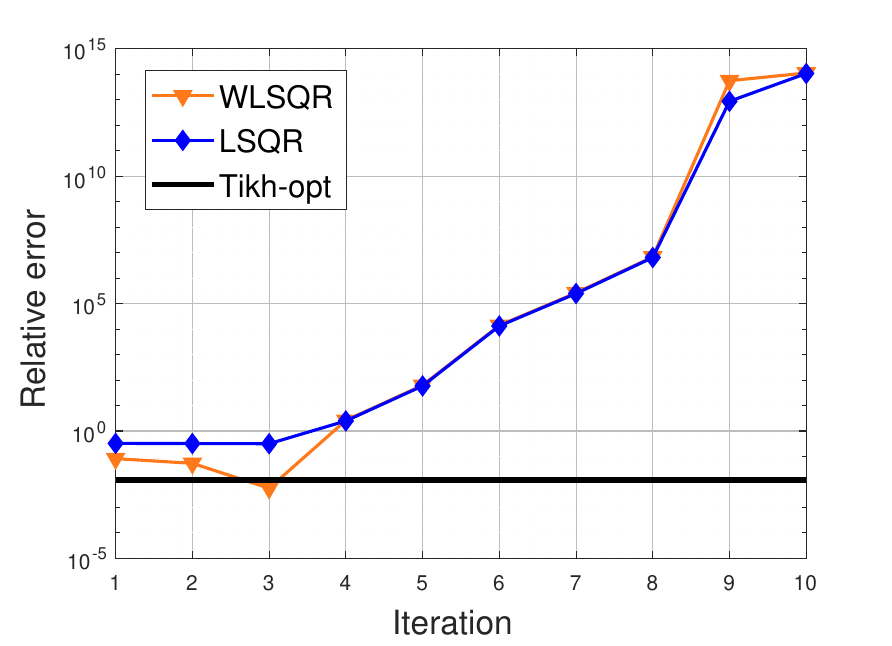}}
  \vspace{-2mm}
  \subfloat[]{\label{fig:2g}
  \includegraphics[width=0.33\textwidth]{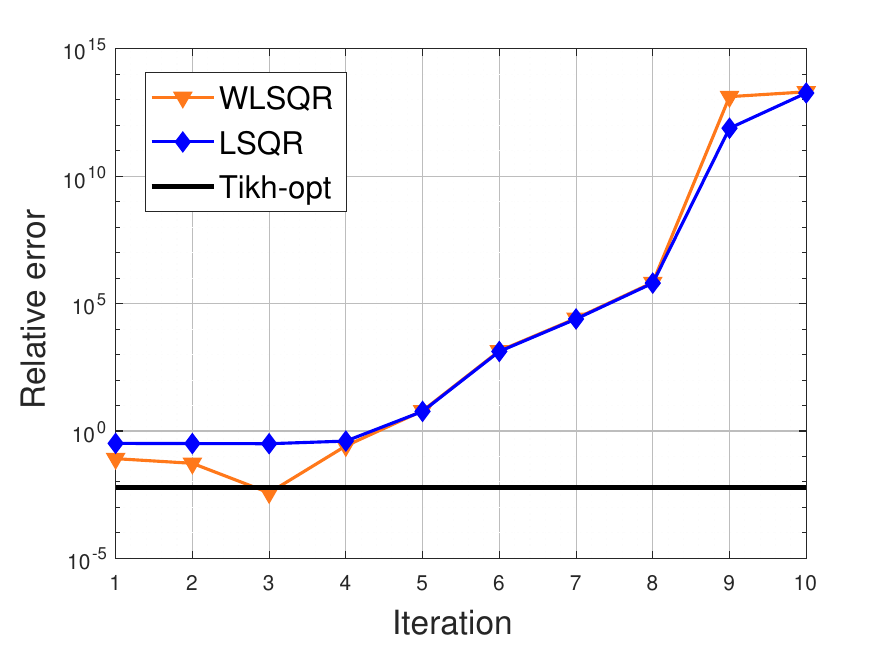}}
	\subfloat[]{\label{fig:2h}
  \includegraphics[width=0.33\textwidth]{./figs/Exam3_semi2.eps}}
	\subfloat[]{\label{fig:2i}
  \includegraphics[width=0.3\textwidth]{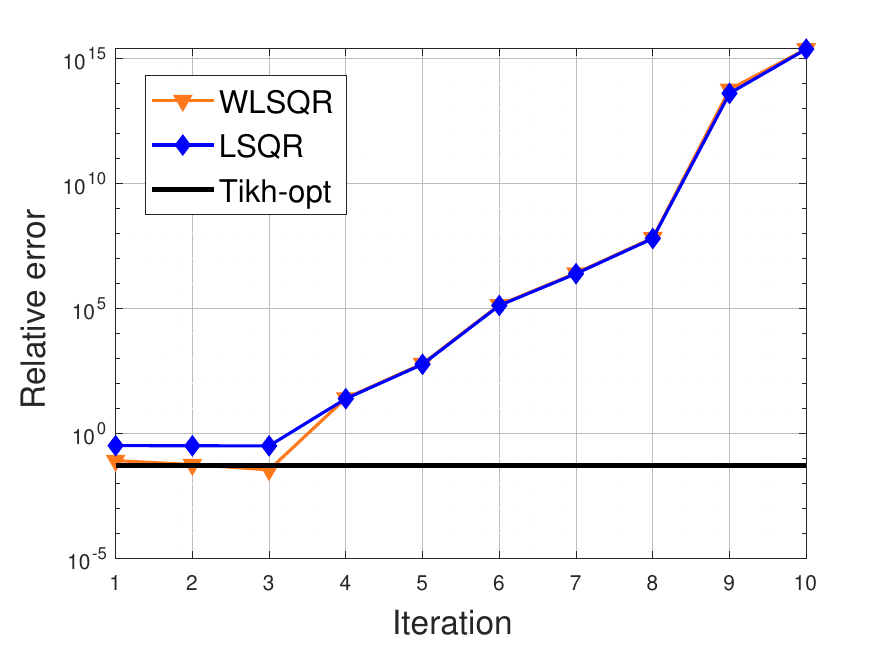}}
  \vspace{-2mm}
  \subfloat[]{\label{fig:2j}
  \includegraphics[width=0.33\textwidth]{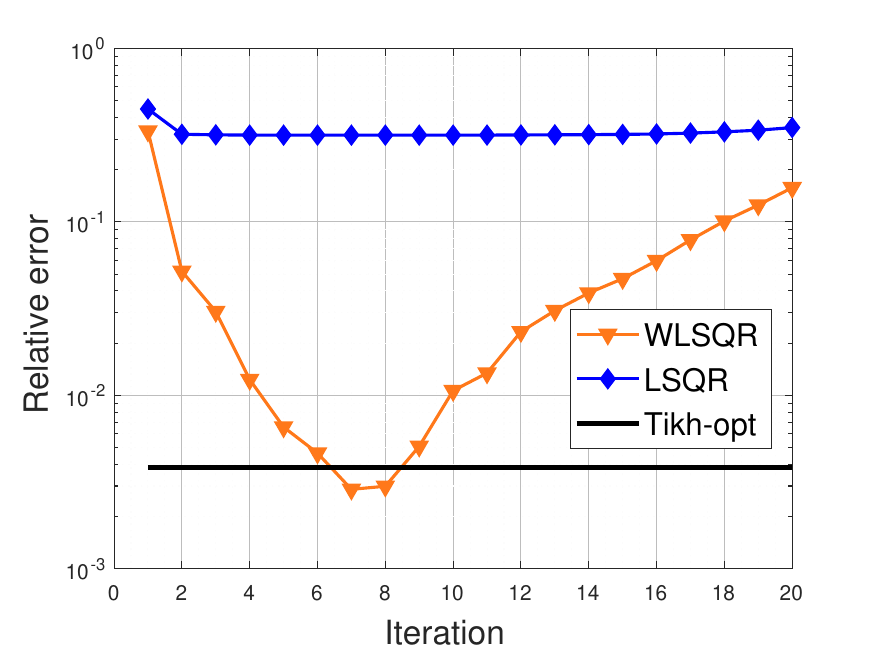}}
	\subfloat[]{\label{fig:2k}
  \includegraphics[width=0.33\textwidth]{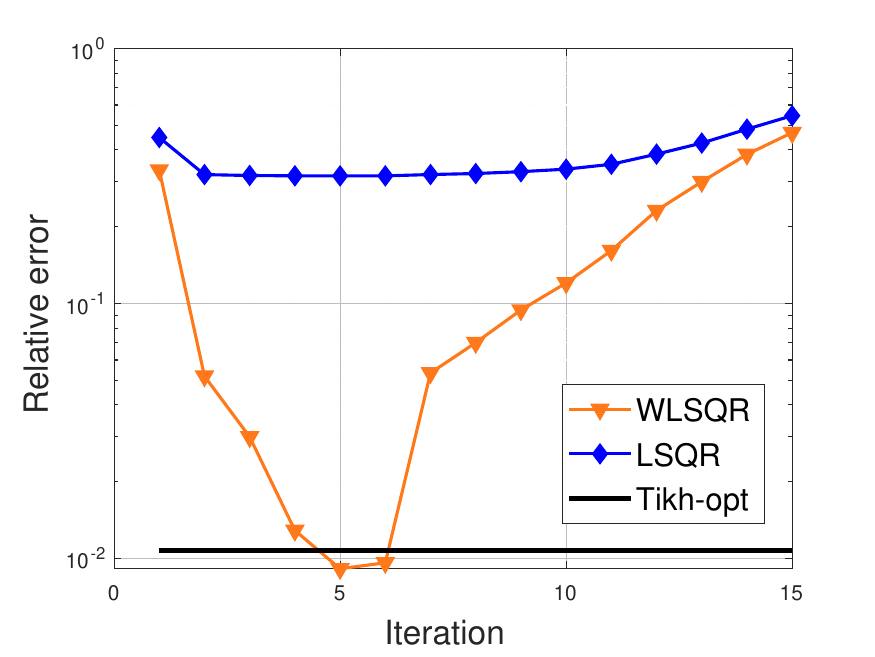}}
	\subfloat[]{\label{fig:2l}
  \includegraphics[width=0.33\textwidth]{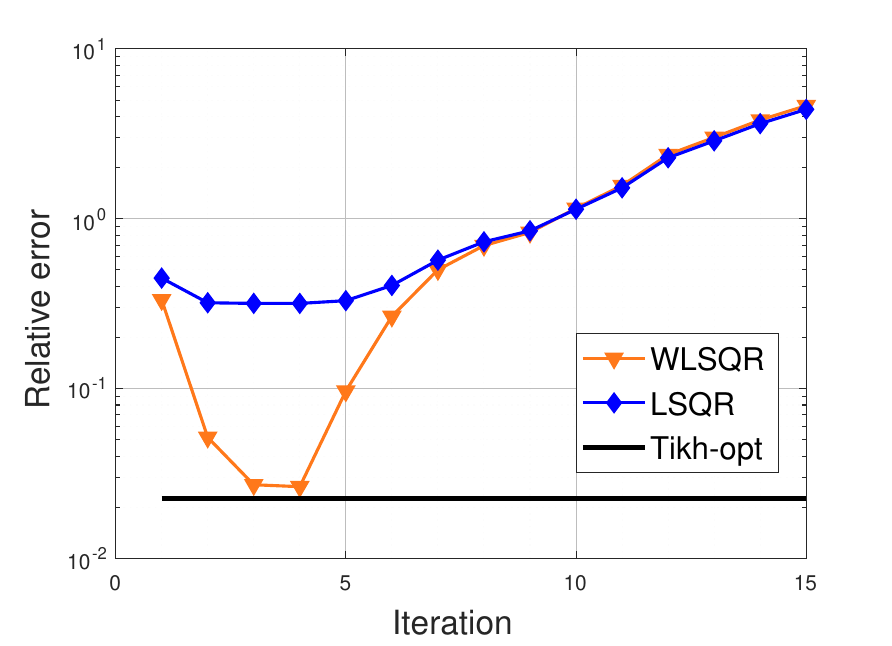}}
	\caption{Semi-convergence curves of LSQR and WLSQR for the four test examples. Figures from the top to bottom correspond to Example 1 -- Example 4; figures from left to right correspond to noise levels $\varepsilon=10^{-3},10^{-2},10^{-1}$.}
	\label{fig2}
\end{figure}

The convergence behaviors of WLSRQ and LSQR are shown in Figure \ref{fig2} using the relative error curves, where the noise levels are set as $\varepsilon=10^{-3},10^{-2},10^{-1}$ for the four test examples. For each example, we find that both the two methods exhibit semi-convergence property, but the relative error of LSQR does not decrease obviously. In contrast, the relative error for WLSQR at the semi-convergence point is much smaller, and it is usually a bit smaller than the best Tikhonov regularization solution. This confirms the regularization effect of WLSQR, which can incorporate the prior information encoded by $\|x\|_{M}^2$ into the solution subspaces.

\begin{table}[htbp]
	\centering
	\caption{Relative errors of the final regularized solutions and corresponding early stopping iterations (in parentheses), where $\varepsilon=10^{-3}$}
	\scalebox{1.0}{
	\begin{tabular}{lllll}
		\toprule
		Problem     & Example 1  & Example 2 & Example 3  & Example 4  \\
		\midrule
		Tikh-opt  & 0.0361 & 0.0060 & 0.0062 & 0.0038  \\
		WLSQR-opt & 0.031 (9) & 0.0057 (11) & 0.0037 (3) & 0.0029 (7)  \\
    WLSQR-DP & 0.0474 (7) & 0.0089 (8) & 0.0538 (2) & 0.0066 (5)   \\
    WLSQR-LC & 0.0451 (8) & 0.0186 (14) & 0.0037 (3) & 0.0233 (12)  \\
    LSQR-opt & 0.3178 (9) & 0.3163 (11) & 0.3166 (3) & 0.3162 (7)   \\
    LSQR-DP  & 0.3194 (7) & 0.3163 (8) & 0.3206 (2) & 0.3163 (5)   \\
    LSQR-LC  & 0.3191 (8) & 0.3164 (14) & 0.3166 (3) & 0.3170 (12)   \\
		\bottomrule
	\end{tabular}}
	\label{tab2}
\end{table}

To see that WLSQR with early stopping can compute a regularized solution of good accuracy, we list the estimated stopping iterations and corresponding relative errors for both WLSQR and LSQR in Table \ref{tab2}. We can find that DP always under-estimate the optimal early stopping iteration for both WLSQR and LSQR, while LC can either under-estimate or over-estimate the optimal early stopping iteration. For WLSQR, the DP method for Example 3 and the LC method for Example 2 and 4 get a regularized solution with a slightly high error, but they are much more accurate than the corresponding LSQR solutions.

\begin{figure}[htbp]
	\centering
	\subfloat[]{\label{fig:3a}
  \includegraphics[width=0.38\textwidth]{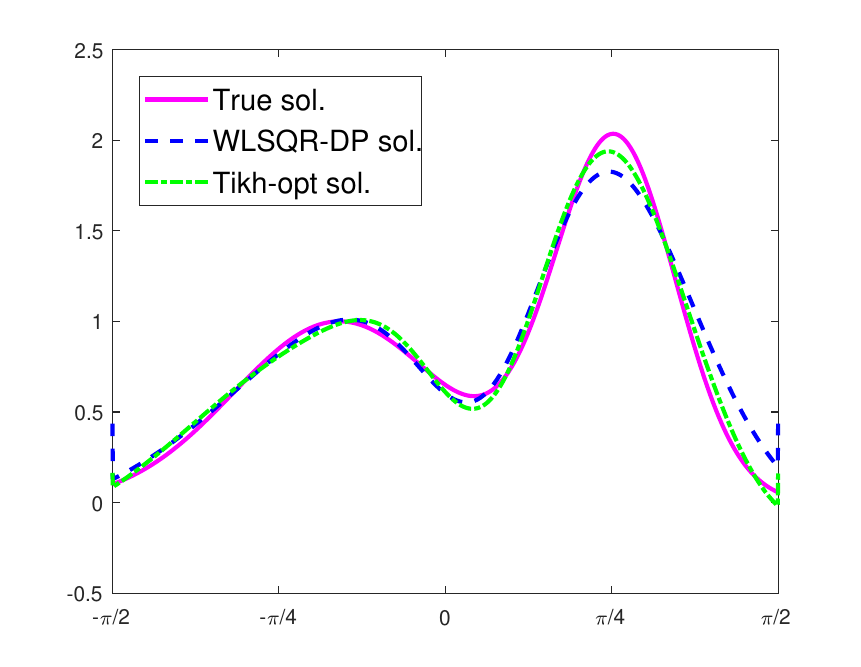}}
	\subfloat[]{\label{fig:3b}
  \includegraphics[width=0.38\textwidth]{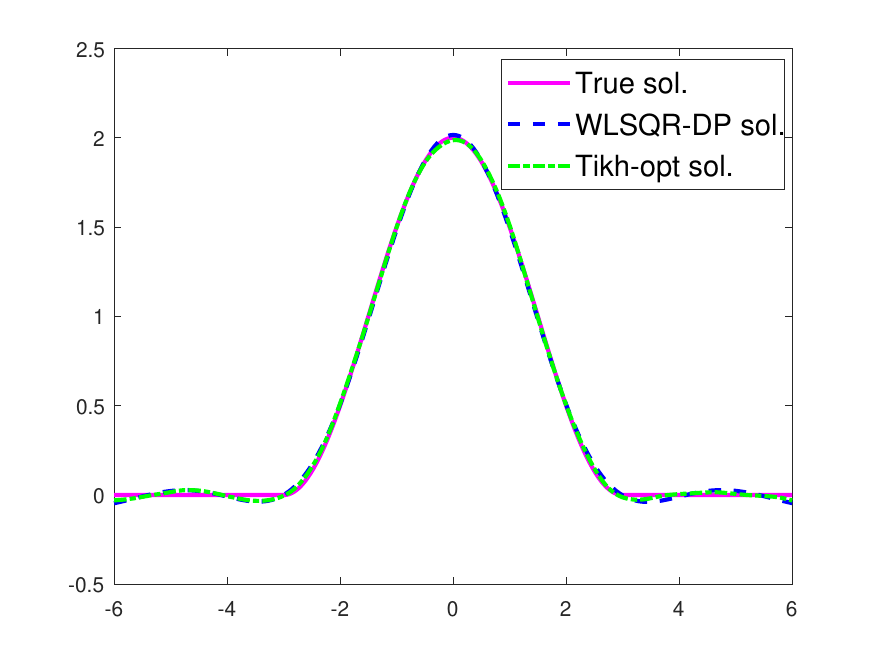}}
  \vspace{-2mm}
	\subfloat[]{\label{fig:3c}
  \includegraphics[width=0.38\textwidth]{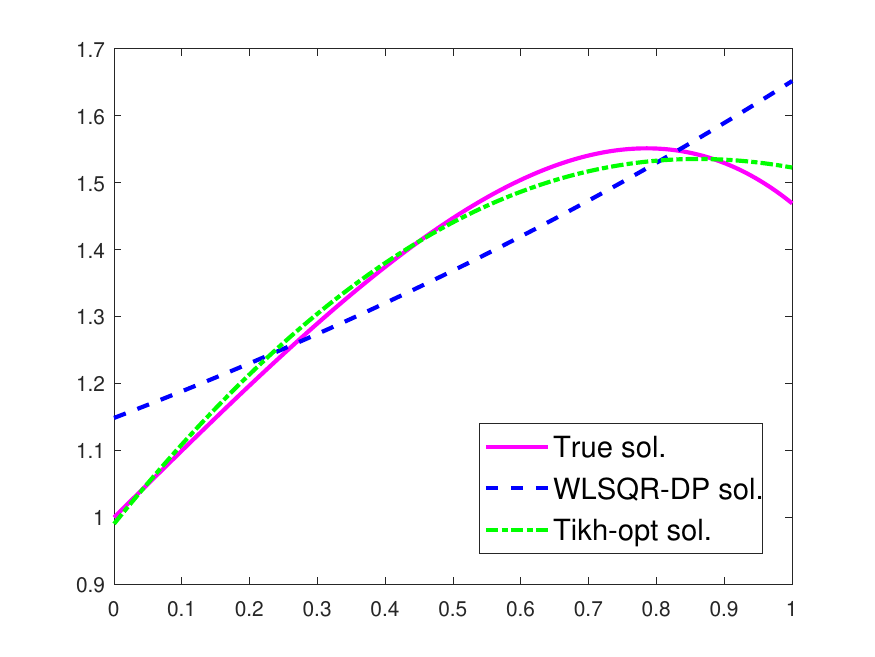}}
	\subfloat[]{\label{fig:3d}
  \includegraphics[width=0.38\textwidth]{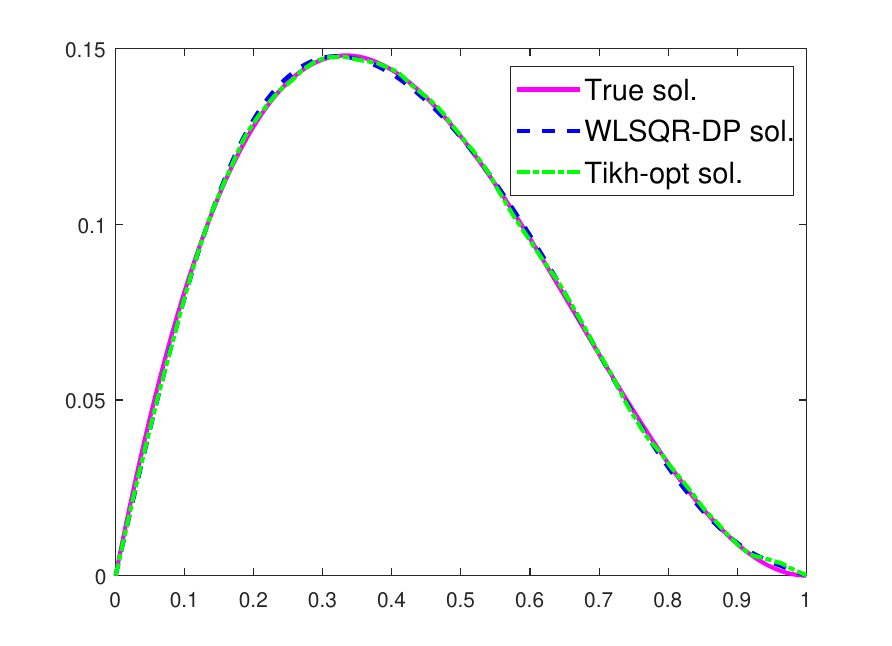}}
	\caption{The reconstructed solutions computed by the WLSQR algorithm with DP as the early stopping rule and the Tikhonov regularization \eqref{tikh1} with optimal regularization parameter. The noise levels are $\varepsilon=10^{-2}$ for all four test examples: (a) Example 1; (b) Example 2; (c) Example 3; (d) Example 4.}
	\label{fig3}
\end{figure}

We depict the reconstructed solutions computed by WLSQR with DP as an early stopping rule in Figure \ref{fig3}, where the optimal Tikhonov regularized solutions are used as a comparison. The reconstructed solutions for Example 1, 2 and 4 by WLSQR are all of high quantity, very close to the optimal Tikhonov solutions and true solutions. For Example 3, the DP solution by WLSQR is over-smoothed, with a slightly poor accuracy. Although we do not depict it, the optimal solution at the semi-convergence point of WLSQR for Example 3 is very close to the true solution. However, in practice, we can not use $x_{\text{true}}$ to find the semi-convergence point, and DP or LC is one of the only several choices we can use to find a solution with not-so-bad accuracy.

\begin{figure}[htbp]
	\centering
	\subfloat[]{\label{fig:4a}
  \includegraphics[width=0.25\textwidth]{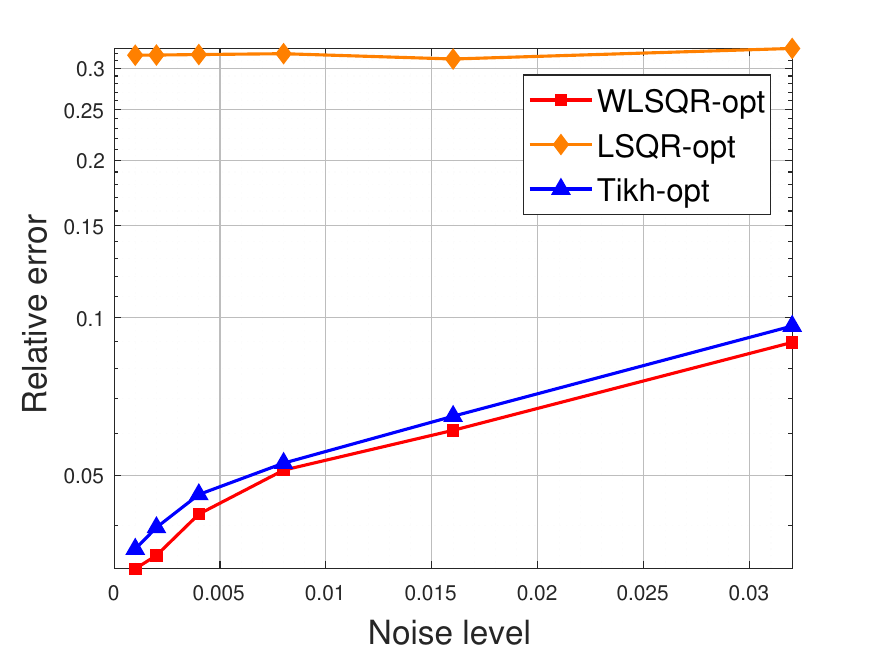}}
	\subfloat[]{\label{fig:4b}
  \includegraphics[width=0.25\textwidth]{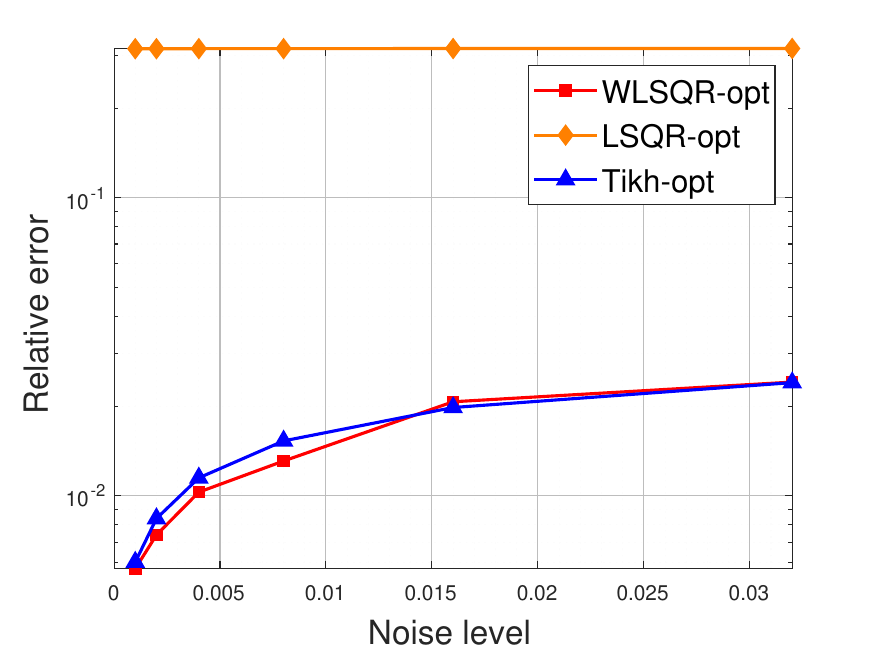}}
	\subfloat[]{\label{fig:4c}
  \includegraphics[width=0.25\textwidth]{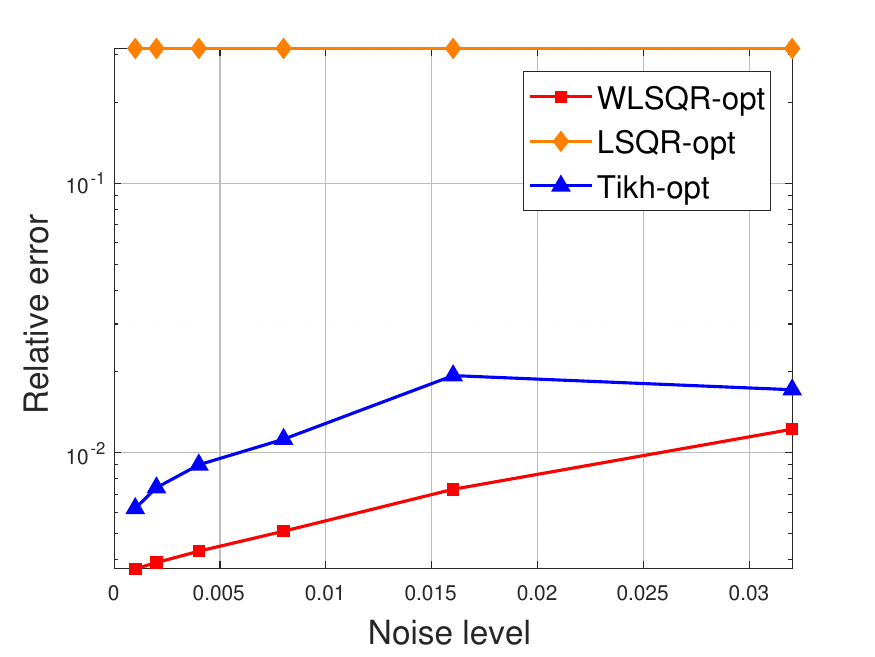}}
	\subfloat[]{\label{fig:4d}
  \includegraphics[width=0.25\textwidth]{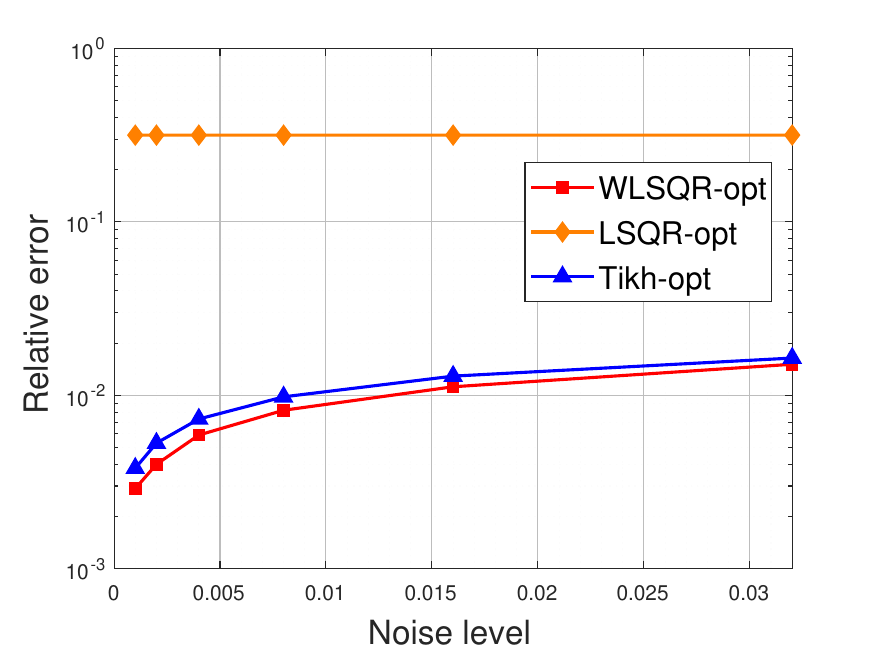}}
  \vspace{-2mm}
  \subfloat[]{\label{fig:5a}
  \includegraphics[width=0.25\textwidth]{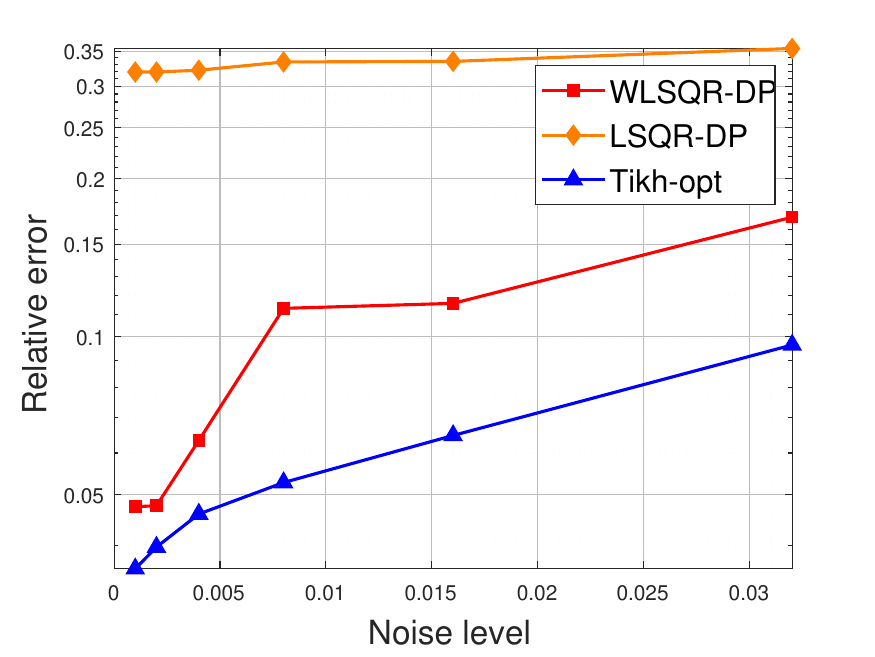}}
	\subfloat[]{\label{fig:5b}
  \includegraphics[width=0.25\textwidth]{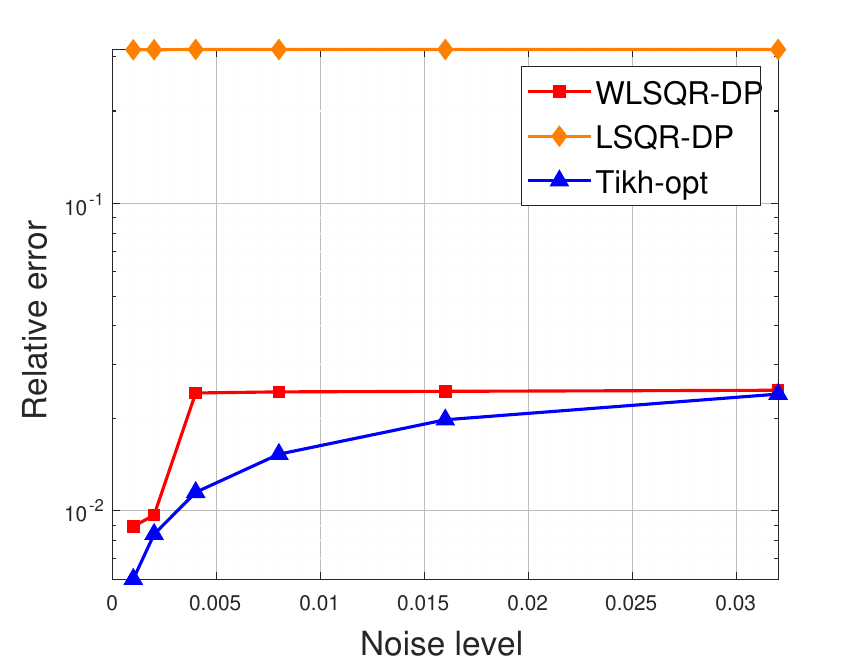}}
	\subfloat[]{\label{fig:5c}
  \includegraphics[width=0.25\textwidth]{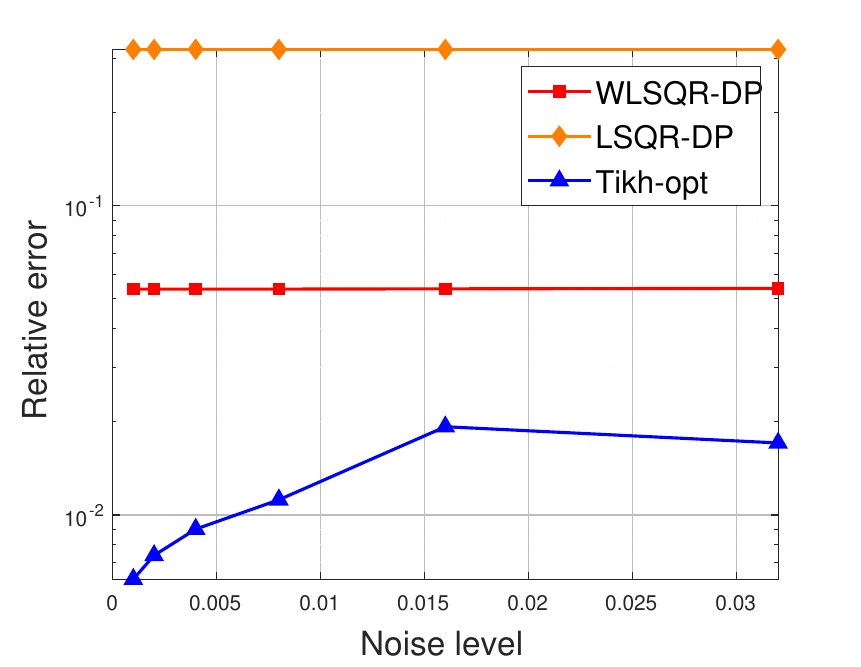}}
	\subfloat[]{\label{fig:5d}
  \includegraphics[width=0.25\textwidth]{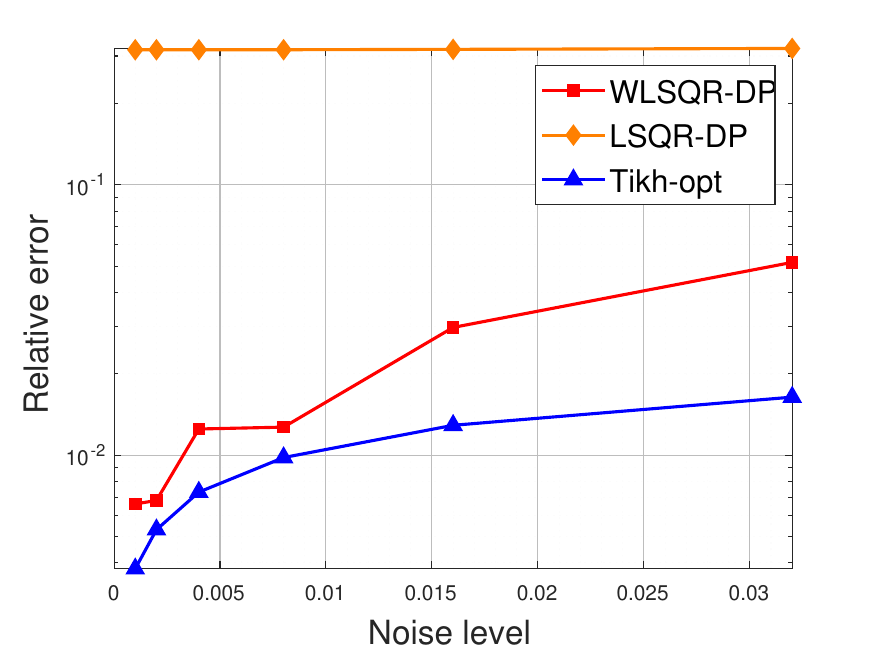}}
	\caption{Convergence rate of the regularized methods WLSQR, LSQR and Tikhonov regularization as the noise level decreases as $\varepsilon=3.2\times 10^{-2}, 1.6\times 10^{-2}, 8\times 10^{-3}, 4\times 10^{-3}, 2\times 10^{-3}, 1\times 10^{-3}$. Figures on the top and bottom correspond to the relative errors at the early stopping iterations that are optimal and estimated by DP, respectively. Figures from left to right correspond to Example 1 -- Example 4.}
	\label{fig4}
\end{figure}

To further demonstrate the regularization effect of WLSQR, we compare the convergence rates of WLSQR with LSQR as the noise level approaches zero, where the noise level decreases as $\varepsilon=3.2\times 10^{-2}, 1.6\times 10^{-2}, 8\times 10^{-3}, 4\times 10^{-3}, 2\times 10^{-3}, 1\times 10^{-3}$. The top four subfigures in Figure \ref{fig4} depict variations of relative error for the optimal regularized solutions computed by WLSQR, LSQR and Tikhonov regularization. We can find that both WLSQR and Tikhonov regularization obtain a convergent regularized solution as the noise decreases to zero, and the WLSQR solution usually has better accuracy. But for LSQR, the relative error remains almost the same even if the noise level approaches zero, this is because LSQR does not make use of the prior information encoded by $\|x\|_{M}^2$. The bottom four subfigures depict variations of relative error when using DP as early stopping rules. Similar to the above tests, DP does not work very well for Example 3, but it is very fruitful for the other three examples.

To summarize, the aforementioned experiments have confirmed that the WLSQR algorithm combined with a proper early stopping rule can obtain a good regularized solution for the regularizer $\|x\|_{M}^2$. This is because WGKB can approximate dominant WSVD components of $A$, which contains main information about the true solution, as is revealed by the Tikhonov solution to \eqref{tikh1} and the TWSVD solution.

%%--------------------------------------------------
\section{Conclusions}\label{sec7}
We have generalized a new form of SVD under a non-standard inner product, named the weighted SVD (WSVD). The WSVD shares several similar properties and applications as the standard SVD, such as the low-rank approximation property and solving the least squares problems. Meanwhile, it is very convenient to handle the matrix computation problems with $\|x\|_{M}$ norm. We have proposed a weighted Golub-Kahan bidiagonalization (WGKB) for computing several dominant WSVD components, and a WGKB-based algorithm, called the weighted LSQR (WLSQR) to solve iteratively least squares problems with minimum $\|x\|_{M}$ norm. Using WSVD, we have analyzed the Tikhonov regularization of the linear ill-posed problem with regularizer $\|x\|_{M}^2$ and given the truncated WSVD solution. We have proposed the WGKB-based subspace projection regularization method, which is equivalent to WLQR with early stopping rules to efficiently compute the regularized solution. This regularization method avoids computing the Cholesky factorization of $M$ and can efficiently incorporate the prior information encoded by the regularizer $\|x\|_{M}^2$. Several numerical experiments are performed to illustrate the fruitfulness of our methods.

%%-------------------------------------------------------------
%\section*{Acknowledgements}

%\appendix
%\section{Auxiliary results}

%%---------------------------------------------------
\bibliographystyle{abbrv}
\bibliography{ref}

\end{document}